\newcommand{\xMapsto}[2][]{\ext@arrow 0599{\Mapstofill@}{#1}{#2}}
\def\Mapstofill@{\arrowfill@{\Mapstochar\Relbar}\Relbar\Rightarrow}
\newcommand{\N}{\mathbb{N}}
\newcommand{\Z}{\mathbb{Z}}
\newcommand{\R}{\mathbb{R}}
\newcommand{\C}{\mathbb{C}}
\newcommand{\T}{\mathbb{T}}
\newcommand{\HH}{\mathbb{H}}
\newcommand{\Rd}{\mathbb{R}^{d}}
\newcommand{\cA}{\mathcal{A}}
\newcommand{\cC}{\mathcal{C}}
\newcommand{\cF}{\mathcal{F}}
\newcommand{\cL}{\mathcal{L}}
\newcommand{\cR}{\mathcal{R}}
\newcommand{\cU}{\mathcal{U}}
\newcommand{\slz}{\text{SL}_{2}\left(\mathbb{Z}\right)}
\newcommand{\GL}{\text{GL}}
\def\ord{\text{ord}}
\def\covol{\text{covol}}
\newcommand{\ind}{\mathbbm{1}}
\newcommand{\pr}[1]{{\left(#1\right)}}
\newcommand{\prb}[1]{{\left[#1\right]}}
\newcommand{\prs}[1]{{\left\{#1\right\}}}
\newcommand{\ipp}[1]{{\left\langle #1\right\rangle}}
\newcommand{\floor}[1]{{\left\lfloor #1\right\rfloor}}
\newcommand{\ceil}[1]{{\left\lceil #1\right\rceil}}
\newcommand{\msum}[2]{\sum_{\begin{subarray}{c} 
   #1 \\ 
   #2 
 \end{subarray} }}
\newcommand{\sub}{\subset}
\numberwithin{equation}{section}
\newtheorem{thm}{Theorem}[section]
\newtheorem{corr}[thm]{Corollary}
\newtheorem{prop}[thm]{Proposition}
\newtheorem{lemma}[thm]{Lemma}
\newtheorem*{claim*}{Claim}
\theoremstyle{definition}
\theoremstyle{remark}
\newtheorem*{remark}{Remark}
\title{Zeros of Theta Functions Associated with Self-Dual Lattices}
\author{Roei Raveh}
\address{School of Mathematical Sciences\\
Tel Aviv University\\
Tel Aviv 69978, Israel}
\email{roeiraveh@mail.tau.ac.il}
\date{}
\begin{document}
\begin{abstract}
    We study the zeros of theta functions  $\Theta_{\Gamma_{4k}}$ associated with the lattices $\Gamma_{4k}$, a family of self-dual lattices generalizing the $\mathsf{E}_{8}$ lattice. Our results show two different behaviors of the zeros according to the lattice parity:
    When $\Gamma_{4k}$ is an even lattice, we show that the zeros all lie on the line $\Re z =\frac{1}{2}$ in the fundamental domain and prove that the zeros are equidistributed with respect to an explicit probability measure on the line $\Re z = \frac{1}{2}$. However, when the $\Gamma_{4k}$ is an odd lattice, there are no zeros on the line $\Re z =\frac{1}{2}$, only exponentially close to it.
    Our argument relies on representing $\Theta_{\Gamma_{4k}}$ as a polynomial in the modular $\lambda$-function. We then study the zeros of this polynomial and exploit some conformal properties of $\lambda$ to get our results.
\end{abstract}
\maketitle

\section{Introduction} 


\subsection{Zeros of modular forms}
The zeros of modular forms are an important part of the theory of modular forms; by the work of Bruinier, Kohnen, and Ono, the zeros are directly related to the Fourier coefficients of the form and can relate to values of certain $L$-series, \cite[see Theorem 3]{OnoKohnenBruinier}.

For any $f$ a nonzero modular form of weight $k$, we have the valence formula (see \cite{serre}):
\begin{equation}\label{eq: valence}
    \ord_{\infty}\pr{f} + \frac{1}{2}\ord_{i}\pr{f} + \frac{1}{3}\ord_{\rho}\pr{f} + \sum_{z\in\cF\setminus\{i,\rho\}}\! \ord_{z}\pr{f} =\frac{k}{12},
\end{equation}
where $\rho = e^{{i\pi}/{3}} = \frac{1}{2}+i\frac{\sqrt{3}}{2}$ and 
\[\cF = \prs{z\in\HH:\abs{z}> 1, -\frac{1}{2}< \Re z< 0}\bigcup \prs{z\in\HH:\abs{z}\ge 1, 0\le \Re z\le \frac{1}{2}},\]
as demonstrated Figure \ref{fig: fundomain}.
\begin{figure}[ht]
    \centering
    \begin{tikzpicture}[scale = 2] 
    \draw[thick,->] (-1.25,0) -- (1.25,0) node[right] {};
    \draw[fill=gray!30,draw =white] (-0.5,2.6) -- (120:1) arc (120:60:1) -- (0.5,2.6);
    \draw[line width=0.2mm] (60:1) arc (60:90:1);
    \draw[line width=0.15mm,dashed] (0,1) arc (90:120:1);
    \draw[line width=0.15mm,dashed] (120:1) -- (-0.5,2.6);
    \draw[line width=0.2mm] (60:1) -- (0.5, 2.6);
    \draw[line width=0.15mm,loosely dashed] (1,0) arc (0:60:1);
    \draw[line width=0.15mm,loosely dashed](120:1) arc (120:180:1);
    \draw[loosely dashed] (0.5,0) -- (60:1);
    \draw[loosely dashed] (-0.5,0) -- (120:1);
    \filldraw (0,0) circle (0.75pt) node[below] {$0$};
    \filldraw (1,0) circle (0.75pt) node[below right] {$1$};
    \filldraw (-1,0) circle (0.75pt) node[below left] {$-1$};
    \filldraw (-0.5,0) circle (0.75pt) node[below] {$-\frac{1}{2}$};
    \filldraw (0.5,0) circle (0.75pt) node[below] {$\frac{1}{2}$};
    \filldraw (0,1) circle (0.75pt) node[above] {$i$};
    \filldraw (60:1) circle (0.75pt) node[above right] {$\rho$};
    \filldraw[gray!30] (0,1.8) circle (0.75pt) node {\color{black}$\cF$};
    \end{tikzpicture}
    \caption{The fundamental domain $\cF$.}
    \label{fig: fundomain}
\end{figure}
The valence formula \eqref{eq: valence} implies that for a nonzero modular form of weight $k$, there are about ${\frac{k}{12}}$ zeros in the fundamental domain $\cF$, which raises a natural question:
\begin{quote}
    For a distinguished family of modular forms, can we find the location or limit distribution of their zeros in the fundamental domain? 
\end{quote}
The first result regarding this inquiry was given in 1970 by F. Rankin and Swinnerton-Dyer, \cite{sdr}. They proved that the zeros of the Eisenstein series in the fundamental domain lie on the arc $\cA=\left\{e^{i\varphi}:\frac{\pi}{3}\le \varphi\le \frac{\pi}{2}\right\}$ and become uniformly distributed in $\cA$ as $k\to\infty$. 
For different types of results on the zeros of various modular forms, see \cite{choi, dukejen, gun, holowinsky2010mass, Kimmel, rankin, raveh, RVY, rudnick2005, rudnick2023, Cornelissen,zilka}.

This paper concerns the zeros of the theta function of the lattices $\Gamma_{4k}$, a family of self-dual lattices.
\subsection{Theta functions and self-dual lattices}
For any lattice $\Lambda\sub \Rd$, define the corresponding theta function $\Theta_{\Lambda}:\HH\to\C$ as
\begin{equation}\label{eq: theta fn def}
    \Theta_{\Lambda}\pr{\tau} = \sum_{x\in\Lambda}e^{\pi i\tau\norm{x}^{2}}.
\end{equation}
Theta functions play a crucial role in the study of integral quadratic forms and are closely tied to modular forms.
If the lattice is integral, i.e., $\norm{x}^2 \in \Z$ for all $x\in \Lambda$, then $\Theta_{\Lambda}$ is a modular form of weight $d/2$ of some level $N$. A particularly interesting case is when $\Lambda$ is self-dual and  $\Lambda=\Lambda'$, then the function $\Theta_{\Lambda}$ is a modular form of weight $d/2$ of level $2$. Integral lattices are assigned a parity: lattices for which $\norm{x}^{2}\in 2\Z$ for all $x\in \Lambda$ are called \emph{even}, and non-even lattices are called \emph{odd}. Even self-dual lattices are called even unimodular lattices, and any theta function associated with an even unimodular lattice is a modular form for $\slz$. 

Even unimodular lattices have been a subject of interest among mathematicians in the last century. For a given $d\ge 1$, there are finitely many even unimodular lattices in $\R^{d}$. 
In dimension $8$, there exists a unique (up to equivalences) even unimodular lattice called the $\mathsf{E}_{8}$ lattice (see \cite{ConwaySloane}). Smith \cite{Smith} proved its existence in 1867, and Korkine and Zolotarev \cite{KorkineZolotareff} followed with an explicit construction in 1873. The $\mathsf{E}_{8}$ lattice (and its higher-dimensional generalizations) is at the center of many applications in mathematics. For example, it is in the core of Viazovska's work on the sphere packing problem in dimension $8$ \cite{SpherePackingViazovska}, it appears in the work of Freedman on exotic $4$-manifolds \cite{Freedman}, and was used to construct a counter-example for Mark Kac's \say{Can one hear the shape of a drum?} \cite{123, HearingDrums}.

The lattices $\Gamma_{n}$ are the higher-dimensional generalizations of the famous $\mathsf{E}_{8}$ lattice.\footnote{These are actually generalizations of what is known as the even coordinate system of $\mathsf{E}_{8}$. They have remarkable packing properties; see \cite[p. 119-120]{ConwaySloane}.} They are constructed as follows: For any $n\ge 1$, define
\begin{equation}\label{eq: lattice def}
    \Gamma_{n}=D_{n}\cup\pr{\delta_{n}+D_{n}}.
\end{equation}
where 
\begin{equation}\label{eq: Dn lattice def}
    D_{n} = \prs{\pr{x_1, \ldots,x_{n}}\in \Z^{n}:\sum_{i=1}^{n}x_i \equiv 0\pmod{2}},\text{ and } \delta_{n} =\pr{\frac{1}{2},\ldots,\frac{1}{2}}\in\R^{n}.
\end{equation} 
For any $n\ge 1$, the lattice $\Gamma_{n}$ is self-dual if and only if $4\mid n$ and even if and only if $8\mid n$ (see \cite{serre}). In this paper, we provide a comprehensive understanding of the location and limiting distribution of the zeros of $\Theta_{\Gamma_{4k}}$.
As $\Theta_{\Gamma_{4k}}$ are all modular forms of level $2$, we can study their zeros in the fundamental domain of $\Gamma\pr{2}$, the principal subgroup of level $2$. In the fundamental domain of $\Gamma\pr{2}$, the zeros of $\Theta_{\Gamma_{4k}}$ exhibit a strong pattern.
Informally, they are \say{attracted} to the six geodesics $\cL_{\rho}$, $\cL_{\rho}^*$, $\cC$, $\cC^{*}$, $\cU$, and $\cU^*$ (see Figure \ref{fig: theta zeros pattern}), where
\begin{align}
    \label{eq: line 0.5} \cL_{\rho} &  = \prs{\frac{1}{2}+it:t>\frac{\sqrt{3}}{2}},\\
    \label{eq: the arc center 1} \cC&  = \prs{1+e^{i\varphi}:\varphi\in\left(\frac{2\pi}{3},\pi\right)},\\
     \label{eq: unit arc} \cU & = {\prs{e^{i\varphi}:\varphi\in\left(0,\frac{\pi}{3}\right)}},
\end{align}
and  $\cL_{\rho}^*$, $\cC^*$, and $\cU^*$ denote their respective reflections along the imaginary axis.

\begin{figure}[ht]
    \centering
    \begin{subfigure}[b]{0.3\textwidth}
    \begin{tikzpicture}[scale = 1.7] 
        \definecolor{newblue}{RGB}{26, 128, 187};
        \draw[gray,thick,->] (-1.1,0) -- (1.1,0) node[right] {};
        \draw[fill=gray!10,draw =white] (-1,2) -- (-1,0) arc (180:0:0.5) arc (180:0:0.5) -- (1,2);
        \draw[gray, line width=0.2mm] (-1,0) -- (-1,2);
        \draw[gray, line width=0.15mm,dashed] (1,0) -- (1,2);
        \draw[gray, line width=0.2mm] (0,0) arc (0:180:0.5);
        \draw[gray, line width=0.15mm,dashed] (1,0) arc (0:180:0.5);
        \draw[gray, line width = 0.2mm] (60:1) -- (0.5,2);
        \draw[gray, line width = 0.2mm] (120:1) -- (-0.5,2);
        \draw[line width=0.2mm, gray] (1,0) arc (0:60:1);
        \draw[line width=0.2mm, gray] (0,0) arc (180:120:1);
        \draw[line width=0.2mm, gray] (0,0) arc (0:60:1);
        \draw[line width=0.2mm, gray] (-1,0) arc (180:120:1);
        \filldraw[gray] (1,0) circle (0.75pt) node[below] {\tiny$1$};
        \filldraw[gray] (-1,0) circle (0.75pt) node[below] {\tiny$-1$};
        \filldraw[gray] (0,0) circle (0.75pt) node[below] {\tiny$0$};
        \filldraw[gray] (-0.5,0) circle (0.75pt) node[below] {\tiny$-\frac{1}{2}$};
        \filldraw[gray] (0.5,0) circle (0.75pt) node[below] {\tiny$\frac{1}{2}$};  
        \filldraw[gray] (120:1) circle (0.75pt) node[above left] {\tiny$-\bar{\rho}$};
        \filldraw[gray] (60:1) circle (0.75pt) node[above right] {\tiny$\rho$};
        \foreach \point in {(0.5, 0.866025), (-0.5, 0.866025), (-0.793036, 0.609174), (0.793036,  0.609174), (-0.668465, 0.743744), (0.668465, 0.743744), (-0.572048,  0.82022), (0.572048, 0.82022), (0.5, 0.958308), (-0.427952, 0.82022),  (-0.5, 0.958308), (0.427952, 0.82022), (0.5, 1.12167), (-0.331535,  0.743744), (-0.5, 1.12167), (0.331535, 0.743744), (0.5, 1.47169),  (-0.206964, 0.609174), (-0.5, 1.47169), (0.206964, 0.609174)}{
            \filldraw[newblue] \point circle (0.75pt);
        }
    \end{tikzpicture}
    \end{subfigure}
    \begin{subfigure}[b]{0.3\textwidth}
    \begin{tikzpicture}[scale = 1.7] 
        \definecolor{newblue}{RGB}{26, 128, 187};
        \draw[gray,thick,->] (-1.1,0) -- (1.1,0) node[right] {};
        \draw[fill=gray!10,draw =white] (-1,2) -- (-1,0) arc (180:0:0.5) arc (180:0:0.5) -- (1,2);
        \draw[gray, line width=0.2mm] (-1,0) -- (-1,2);
        \draw[gray, line width=0.15mm,dashed] (1,0) -- (1,2);
        \draw[gray, line width=0.2mm] (0,0) arc (0:180:0.5);
        \draw[gray, line width=0.15mm,dashed] (1,0) arc (0:180:0.5);
        \draw[gray, line width = 0.2mm] (60:1) -- (0.5,2);
        \draw[gray, line width = 0.2mm] (120:1) -- (-0.5,2);
        \draw[line width=0.2mm, gray] (1,0) arc (0:60:1);
        \draw[line width=0.2mm, gray] (0,0) arc (180:120:1);
        \draw[line width=0.2mm, gray] (0,0) arc (0:60:1);
        \draw[line width=0.2mm, gray] (-1,0) arc (180:120:1);
        \filldraw[gray] (1,0) circle (0.75pt) node[below] {\tiny$1$};
        \filldraw[gray] (-1,0) circle (0.75pt) node[below] {\tiny$-1$};
        \filldraw[gray] (0,0) circle (0.75pt) node[below] {\tiny$0$};
        \filldraw[gray] (-0.5,0) circle (0.75pt) node[below] {\tiny$-\frac{1}{2}$};
        \filldraw[gray] (0.5,0) circle (0.75pt) node[below] {\tiny$\frac{1}{2}$};  
        \filldraw[gray] (120:1) circle (0.75pt) node[above left] {\tiny$-\bar{\rho}$};
        \filldraw[gray] (60:1) circle (0.75pt) node[above right] {\tiny$\rho$};
        \foreach \point in {(-0.781902, 0.623401), (0.781902, 0.623401), (-0.707634, 0.706579),(0.707634, 0.706579), (-0.64648, 0.762931), (0.64648, 0.762931),(-0.591626, 0.806213), (0.591626, 0.806213), (-0.54062, 0.841267),(0.54062, 0.841267), (0.501382, 0.885149), (-0.484486, 0.855322),(-0.501382, 0.885149), (0.484486, 0.855322), (0.499998, 0.949379),(-0.434283, 0.824602), (-0.499998, 0.949379), (0.434283, 0.824602),(0.5, 1.02981), (-0.381533, 0.785811), (-0.5, 1.02981), (0.381533,0.785811), (0.5, 1.13723), (-0.323982, 0.736885), (-0.5, 1.13723),(0.323982, 0.736885), (0.5, 1.30005), (-0.257716, 0.670085), (-0.5,1.30005), (0.257716, 0.670085), (0.5, 1.64985), (-0.168236, 0.55513),(-0.5, 1.64985), (0.168236, 0.55513)}{
            \filldraw[newblue] \point circle (0.75pt);
        }
    \end{tikzpicture}
    \end{subfigure}
    \begin{subfigure}[b]{0.3\textwidth}
    \begin{tikzpicture}[scale = 1.7] 
        \definecolor{newblue}{RGB}{26, 128, 187};
        \draw[gray,thick,->] (-1.1,0) -- (1.1,0) node[right] {};
        \draw[fill=gray!10,draw =white] (-1,2) -- (-1,0) arc (180:0:0.5) arc (180:0:0.5) -- (1,2);
        \draw[gray, line width=0.2mm] (-1,0) -- (-1,2);
        \draw[gray, line width=0.15mm,dashed] (1,0) -- (1,2);
        \draw[gray, line width=0.2mm] (0,0) arc (0:180:0.5);
        \draw[gray, line width=0.15mm,dashed] (1,0) arc (0:180:0.5);
        \draw[gray, line width = 0.2mm] (60:1) -- (0.5,2);
        \draw[gray, line width = 0.2mm] (120:1) -- (-0.5,2);
        \draw[line width=0.2mm, gray] (1,0) arc (0:60:1);
        \draw[line width=0.2mm, gray] (0,0) arc (180:120:1);
        \draw[line width=0.2mm, gray] (0,0) arc (0:60:1);
        \draw[line width=0.2mm, gray] (-1,0) arc (180:120:1);
        \filldraw[gray] (1,0) circle (0.75pt) node[below] {\tiny$1$};
        \filldraw[gray] (-1,0) circle (0.75pt) node[below] {\tiny$-1$};
        \filldraw[gray] (0,0) circle (0.75pt) node[below] {\tiny$0$};
        \filldraw[gray] (-0.5,0) circle (0.75pt) node[below] {\tiny$-\frac{1}{2}$};
        \filldraw[gray] (0.5,0) circle (0.75pt) node[below] {\tiny$\frac{1}{2}$};  
        \filldraw[gray] (120:1) circle (0.75pt) node[above left] {\tiny$-\bar{\rho}$};
        \filldraw[gray] (60:1) circle (0.75pt) node[above right] {\tiny$\rho$};
        \foreach \point in {(0.5, 0.866025), (-0.5, 0.866025), (-0.851171, 0.524889), (0.851171,0.524889), (-0.777618, 0.628737), (0.777618, 0.628737), (-0.724494,0.689282), (0.724494, 0.689282), (-0.679074, 0.734069), (0.679074,0.734069), (-0.637935, 0.77009), (0.637935, 0.77009), (-0.599571,0.800322), (0.599571, 0.800322), (-0.563169, 0.826342), (0.563169,0.826342), (-0.52819, 0.849127), (0.52819, 0.849127), (0.5, 0.89986),(-0.47181, 0.849127), (-0.5, 0.89986), (0.47181, 0.849127), (0.5,0.945837), (-0.436831, 0.826342), (-0.5, 0.945837), (0.436831,0.826342), (0.5, 0.99933), (-0.400429, 0.800322), (-0.5, 0.99933),(0.400429, 0.800322), (0.5, 1.06347), (-0.362065, 0.77009), (-0.5,1.06347), (0.362065, 0.77009), (0.5, 1.14368), (-0.320926, 0.734069),(-0.5, 1.14368), (0.320926, 0.734069), (0.5, 1.25094), (-0.275506,0.689282), (-0.5, 1.25094), (0.275506, 0.689282), (0.5, 1.41364),(-0.222382, 0.628737), (-0.5, 1.41364), (0.222382, 0.628737), (0.5,1.76339), (-0.148829, 0.524889), (-0.5, 1.76339), (0.148829, 0.524889)}{
            \filldraw[newblue] \point circle (0.75pt);
        }
    \end{tikzpicture}
    \end{subfigure}
    \caption{The zeros of $\Theta_{\Gamma_{4k}}$ where $k\in\prs{20,35,60}$.}
    \label{fig: theta zeros pattern}
\end{figure}
\subsection{Statements and results}
We divide our results into results on the even case, i.e., when $\Gamma_{n}$ is an even unimodular lattice, and results on the odd case, i.e., when $\Gamma_{n}$ is self-dual and odd. We will begin with the even case, which is slightly easier. 
\subsubsection*{The even case}  
In this case we consider the functions $\Theta_{\Gamma_{8k}}$ for $k\ge 1$. Since the lattice $\Gamma_{8k}$ is even, the function $\Theta_{\Gamma_{8k}}$ is a modular form of weight $4k$ for $\slz$. Therefore, we will study its zeros in the fundamental domain $\cF$. 
Throughout this case, we write $4k=12\ell+k'$ with $\ell=\floor{\frac{k}{3}}$ and $k'\in\prs{0,4,8}$. 
Our main results are as follows:
\begin{thm}\label{thm: Zeros}
    For all $k\ge1$, the zeros of $\Theta_{\Gamma_{8k}}$ are all simple (except for $\rho$) and lie on the line $\cL_{\rho}$.
    Furthermore, let $\tau_{k,1},\ldots,\tau_{k,\ell}\in \cL_{\rho}$ be the zeros of $\Theta_{\Gamma_{8k}}$ ordered with decreasing imaginary value,  i.e. $\Im\tau_{k,\ell}<\ldots<\Im\tau_{k,1}$. Then the highest zeros $\tau_{1},\tau_{2},\ldots$ satisfy
    \[\Im\tau_{k,m} =\frac{1}{\pi}\log(\frac{16k}{\pi m}) + o\pr{1},\]
    as $k\to\infty$ and $m=o\pr{k}$.
\end{thm}

\begin{figure}[ht]
    \centering
    \begin{tikzpicture}[scale = 3] 
    \definecolor{newblue}{RGB}{26, 128, 187};
    \draw[thick,->] (-1.25,0) -- (1.25,0) node[right] {};
    \draw[fill=gray!10,draw =white] (-0.5,2.2) -- (120:1) arc (120:60:1) -- (0.5,2.2);
    \draw[line width=0.2mm,gray] (60:1) arc (60:90:1);
    \draw[line width=0.15mm,dashed,gray] (0,1) arc (90:120:1);
    \draw[line width=0.15mm,dashed,gray] (120:1) -- (-0.5,2.2);
    \draw[line width=0.2mm,gray] (60:1) -- (0.5, 2.2);
    \draw[line width=0.15mm,loosely dashed,gray] (1,0) arc (0:60:1);
    \draw[line width=0.15mm,loosely dashed,gray](120:1) arc (120:180:1);
    \draw[loosely dashed,gray] (0.5,0) -- (60:1);
    \draw[loosely dashed,gray] (-0.5,0) -- (120:1);
    \filldraw (0,0) circle (0.5pt) node[below] {$0$};
    \filldraw (1,0) circle (0.5pt) node[below right] {$1$};
    \filldraw (-1,0) circle (0.5pt) node[below left] {$-1$};
    \filldraw (-0.5,0) circle (0.5pt) node[below] {$-\frac{1}{2}$};
    \filldraw (0.5,0) circle (0.5pt) node[below] {$\frac{1}{2}$};
    \filldraw[gray] (0,1) circle (0.5pt) node[below] {$i$};
    \filldraw[gray] (60:1) circle (0.5pt) node[below left] {$\rho$};
    \filldraw[newblue] (0.5,2.04207) circle (0.5pt) node[right] {$\tau_{1}$};
    \filldraw[newblue] (0.5,1.69236) circle (0.5pt) node[right] {$\tau_{2}$};
    \filldraw[newblue] (0.5,1.52974) circle (0.5pt) node[right] {$\tau_{3}$};
    \filldraw[newblue] (0.5,0.874333) circle (0.5pt) node[right] {$\tau_{20}$};
    \foreach \y in {0.891108, 0.908964, 0.92785, 0.947899, 0.969265, 0.99214, 1.01676, 1.04341, 1.07247, 1.10441, 1.1399, 1.17981, 1.22543, 1.27866, 1.34258, 1.42261}{
        \filldraw[newblue] (0.5,\y) circle (0.5pt);
    }
    \end{tikzpicture}
    \caption{The zeros of $\Theta_{\Gamma_{480}}$ in $\cF$.}
    \label{fig: zeros of theta}
\end{figure}
We also study the distribution of the zeros on $\cL_{\rho}$ and show that they are equidistributed on $\cL_{\rho}$ with respect to some density, i.e., an absolutely continuous measure with respect to the $1$-dimensional Lebesgue measure on $\cL_{\rho}$:
\begin{thm}\label{thm: Zeros Distrbution}
    Let $\lambda$ be the modular lambda function and $\varrho\pr{y}=\frac{3}{\pi}\frac{\lambda'\pr{\frac{1}{2}+iy}}{\lambda\pr{\frac{1}{2}+iy}-1}$. Then the zeros of $\Theta_{\Gamma_{8k}}$ are equidistributed on $\cL_{\rho}$ with respect to the measure $\varrho\pr{y}dy$. In addition, we have
    \[\varrho\pr{y}\sim 48e^{-\pi y},\quad \text{as }y\to\infty.\]
\end{thm}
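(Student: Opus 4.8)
The plan is to pass from $\Theta_{\Gamma_{8k}}$ to the modular lambda function, to reduce the location of the zeros on $\cL_\rho$ to a single real trigonometric equation on a circular arc, to prove that the solutions of that equation equidistribute uniformly in the arc-length angle, and finally to transport this uniform law back to the line $\cL_\rho$ by a change of variables. First I would record the decomposition into Jacobi theta constants: writing $\theta_2,\theta_3,\theta_4$ for the standard theta constants, one has $\Theta_{\Gamma_{8k}}=\tfrac12\pr{\theta_2^{8k}+\theta_3^{8k}+\theta_4^{8k}}$, and dividing by $\theta_3^{8k}$, together with the Jacobi identity $\theta_3^4=\theta_2^4+\theta_4^4$ and $\lambda=\theta_2^4/\theta_3^4$, gives
\[
\Theta_{\Gamma_{8k}}=\tfrac12\,\theta_3^{8k}\,P_k(\lambda),\qquad P_k(w)=w^{2k}+(1-w)^{2k}+1 .
\]
Since $\theta_3$ has no zeros in $\HH$, the zeros of $\Theta_{\Gamma_{8k}}$ are exactly the points where $\lambda$ takes a root of $P_k$. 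On $\cL_\rho$ the reflection $\tau\mapsto 1-\bar\tau$ fixes the line and satisfies $\lambda(1-\bar\tau)=\overline{\lambda(\tau)/(\lambda(\tau)-1)}$; since $1-\bar\tau=\tau$ there, setting $w=\lambda(\tfrac12+iy)$ forces $|w-1|=1$, so $\lambda$ maps $\cL_\rho$ onto an arc of the circle centred at $1$. I would then parametrise $w-1=e^{i\theta}$ and check that $\theta=\theta(y)$ increases monotonically from $2\pi/3$ at $\tau=\rho$ to $\pi$ as $y\to\infty$.

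Next I would carry out the reduction to a real equation. Using $w=1+e^{i\theta}=2\cos(\theta/2)\,e^{i\theta/2}$ and $1-w=-e^{i\theta}$, and factoring $e^{ik\theta}$ out of $P_k(w)$, the equation $P_k(w)=0$ becomes
\[
\pr{2\cos(\theta/2)}^{2k}+2\cos(k\theta)=0 .
\]
On any closed subinterval $[\alpha,\beta]\subset(2\pi/3,\pi)$ we have $2\cos(\theta/2)<1$, so the first term tends to $0$ uniformly and the solutions converge to the zeros of $\cos(k\theta)$, which are spaced by $\pi/k$. Hence the number of zeros of $\Theta_{\Gamma_{8k}}$ with $\theta\in[\alpha,\beta]$ equals $\tfrac{k}{\pi}(\beta-\alpha)+O(1)$. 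Dividing by the total number $\ell=\floor{k/3}$ of zeros yields the limiting fraction $\tfrac{3}{\pi}(\beta-\alpha)$, i.e. the solutions equidistribute with respect to $\tfrac{3}{\pi}\,d\theta$ on $[2\pi/3,\pi]$; the $O(1)$ zeros accumulating near $\theta=2\pi/3$ (near $\rho$), where the first term is not small, form a vanishing proportion and do not affect the limit.

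Finally I would change variables back to $y$. Differentiating $e^{i\theta}=\lambda(\tfrac12+iy)-1$ gives $\tfrac{d\theta}{dy}=\tfrac{\lambda'(\frac12+iy)}{\lambda(\frac12+iy)-1}$, which is real and positive by the monotonicity of $\theta(y)$. Pushing $\tfrac{3}{\pi}\,d\theta$ forward through $\theta(y)$ produces exactly $\varrho(y)\,dy$, and the total mass is $\tfrac{3}{\pi}(\pi-2\pi/3)=1$, so $\varrho(y)\,dy$ is a probability measure and the stated equidistribution follows. For the tail asymptotic I would insert the expansion $\lambda(\tfrac12+iy)=16ie^{-\pi y}+O(e^{-2\pi y})$, whence $\lambda-1\to-1$ and $\lambda'\sim\pi i\,\lambda$, giving $\tfrac{\lambda'}{\lambda-1}\sim 16\pi e^{-\pi y}$ and therefore $\varrho(y)\sim 48e^{-\pi y}$.

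The step I expect to be the main obstacle is making the count fully rigorous and uniform near the endpoint $\theta=2\pi/3$, where $\pr{2\cos(\theta/2)}^{2k}$ is no longer negligible: one must show that only $O(1)$ zeros lie in a shrinking neighbourhood of $\rho$ and that they contribute nothing in the limit, while simultaneously establishing the monotonicity (hence the diffeomorphism property) of $\theta(y)$ that legitimises the change of variables. Both points are geometric facts about $\lambda$ on $\cL_\rho$ and should already be available from the analysis used to prove Theorem \ref{thm: Zeros}.
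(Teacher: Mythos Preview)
Your proposal is correct and follows essentially the same route as the paper: factor $\Theta_{\Gamma_{8k}}=\tfrac12\theta_3^{8k}P_k(\lambda)$, use that $\lambda$ carries $\cL_\rho$ bijectively onto the arc $\cC=\{1+e^{i\theta}:\theta\in[2\pi/3,\pi)\}$, reduce to $(2\cos(\theta/2))^{2k}+2\cos(k\theta)=0$, count $\tfrac{k}{\pi}(\beta-\alpha)+O(1)$ solutions in $[\alpha,\beta]$, and push the resulting uniform measure $\tfrac{3}{\pi}\,d\theta$ back through $\theta(y)$ to obtain $\varrho(y)\,dy$ and its asymptotic.

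One remark on the point you flag as the main obstacle: the paper avoids any separate analysis near $\theta=2\pi/3$. Instead of using smallness of $(2\cos(\theta/2))^{2k}$ on compact subintervals, it uses only the uniform bound $(2\cos(\theta/2))^{2k}\le 1$ on all of $[2\pi/3,\pi]$, which already forces a sign change of the full function between consecutive multiples of $\pi/k$; the intermediate value theorem then produces exactly $\ell=\lfloor k/3\rfloor$ zeros, one per subinterval, and the count $\lfloor k(\beta-\alpha)/\pi\rfloor$ holds on every $[\alpha,\beta]\subset[2\pi/3,\pi]$ with no endpoint caveat. The monotonicity of $\theta(y)$ is indeed established earlier (your Corollary~\ref{corr: functions on the arc and on the line}) and is all that is needed for the change of variables.
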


\subsubsection*{The odd case} 
In this case we consider the functions $\Theta_{\Gamma_{8k}}$ for $k\ge 1$. Since the lattice $\Gamma_{8k+4}$ is self-dual, the function $\Theta_{\Gamma_{8k+4}}$ is a modular form of weight $4k+2$ of level $2$. Therefore, we will study its zeros in the fundamental domain $\cF_{\lambda}$. 
In this case, for all $k\ge 1$ we write $4k+2=12\ell+k'$ with $k'\in\prs{6,10,14}$.
In this case, we have the following:
\begin{thm}\label{thm: Zeros odd case}
    For all $k\ge1$, the zeros of $\Theta_{\Gamma_{8k+4}}$ are all simple. Furthermore, there are at least $\ell$ zeros on each of the geodesics $\cU$ and $\cU^*$, a simple zero at the cusp $\mathfrak{1}$, and no zeros on the geodesics  $\cL_{\rho}$, $\cL_{\rho}^*$, $\cC$, and $\cC^{*}$.
\end{thm}
Theorem \ref{thm: Zeros odd case} is in striking difference to Theorem \ref{thm: Zeros}, as the latter implies that there are $\ell$ simple zeros on each  of the geodesic $\cL_{\rho}$, $\cL_{\rho}^*$, $\cC$, $\cC^{*}$, $\cU$, and $\cU^*$ since those are all equivalent under the action of $\slz$. However, the zeros are \say{attracted} to the geodesics $\cL_{\rho}$, $\cL_{\rho}^*$, $\cC$, and $\cC^{*}$ in an exponential rate:
\begin{thm}\label{thm: zeros odd case decay}
    For any $\alpha \in \pr{0,\frac{1}{3}}$ and $k\gg 1$, there exist at least $m\ge k\alpha-2$ distinct zeros $\tau_{k,1},\ldots,\tau_{k,m}\in \cF_{\lambda}$ of $\Theta_{\Gamma_{8k+4}}$ such that
    \[\abs{\Re\pr{\tau_{k,j}}-\frac{1}{2}} \ll_{a}{k^{-1}e^{-c_{\alpha}k}}.\]
\end{thm}
In fact, one can choose $c_{\alpha} = -2\log\pr{\frac{1+2\cos\pr{\frac{5\pi-3\alpha\pi}{12}}}{2}}$.
\begin{figure}[ht]
    \centering
    \begin{tikzpicture}[scale = 3] 
    \definecolor{newblue}{RGB}{26, 128, 187};
    \draw[gray,thick,->] (-1.1,0) -- (1.1,0) node[right] {};
    \draw[fill=gray!10,draw =white] (-1,2.2) -- (-1,0) arc (180:0:0.5) arc (180:0:0.5) -- (1,2.2);
    \draw[gray, line width=0.2mm] (-1,0) -- (-1,2.2);
    \draw[gray, line width=0.15mm,dashed] (1,0) -- (1,2.2);
    \draw[gray, line width=0.2mm] (0,0) arc (0:180:0.5);
    \draw[gray, line width=0.15mm,dashed] (1,0) arc (0:180:0.5);
    \draw[gray, line width = 0.2mm] (60:1) -- (0.5,2.2);
    \draw[gray, line width = 0.2mm] (120:1) -- (-0.5,2.2);
    \draw[line width=0.2mm, gray] (1,0) arc (0:60:1);
    \draw[line width=0.2mm, gray] (0,0) arc (180:120:1);
    \draw[line width=0.2mm, gray] (0,0) arc (0:60:1);
    \draw[line width=0.2mm, gray] (-1,0) arc (180:120:1);   
    \filldraw (0,0) circle (0.75pt) node[below] {$0$};
    \filldraw (1,0) circle (0.75pt) node[below right] {$1$};
    \filldraw (-1,0) circle (0.75pt) node[below left] {$-1$};
    \filldraw (-0.5,0) circle (0.75pt) node[below] {$-\frac{1}{2}$};
    \filldraw (0.5,0) circle (0.75pt) node[below] {$\frac{1}{2}$};
    \filldraw[gray] (120:1) circle (0.5pt) node[above left] {$-\bar{\rho}$};
    \filldraw[gray] (60:1) circle (0.5pt) node[above right] {$\rho$};
    \foreach \y in {(-0.860227, 0.509911), (0.860227, 0.509911), (-0.822756, 0.568395),(0.822756, 0.568395), (-0.793701, 0.608308), (0.793701, 0.608308),(-0.768727, 0.639577), (0.768727, 0.639577), (-0.746252, 0.665663),(0.746252, 0.665663), (-0.725495, 0.688227), (0.725495, 0.688227),(-0.706005, 0.708207), (0.706005, 0.708207), (-0.687494, 0.72619),(0.687494, 0.72619), (-0.669767, 0.742571), (0.669767, 0.742571),(-0.652683, 0.757631), (0.652683, 0.757631), (-0.636139, 0.771575),(0.636139, 0.771575), (-0.620053, 0.78456), (0.620053, 0.78456),(-0.604363, 0.796709), (0.604363, 0.796709), (-0.589018, 0.80812),(0.589018, 0.80812), (-0.573977, 0.818871), (0.573977, 0.818871),(-0.559206, 0.829029), (0.559206, 0.829029), (-0.544676, 0.838646),(0.544676, 0.838646), (-0.530363, 0.847771), (0.530363, 0.847771),(-0.516243, 0.856442), (0.516243, 0.856442), (-0.502987, 0.864294),(0.502987, 0.864294), (0.499935, 0.876848), (-0.490711, 0.860669),(-0.499935, 0.876848), (0.490711, 0.860669), (0.5, 0.893777),(-0.476719, 0.85216), (-0.5, 0.893777), (0.476719, 0.85216), (0.5,0.911629), (-0.462506, 0.843268), (-0.5, 0.911629), (0.462506,0.843268), (0.5, 0.930513), (-0.448087, 0.833902), (-0.5, 0.930513),(0.448087, 0.833902), (0.5, 0.950558), (-0.43344, 0.824021), (-0.5,0.950558), (0.43344, 0.824021), (0.5, 0.971923), (-0.418538,0.813574), (-0.5, 0.971923), (0.418538, 0.813574), (0.5, 0.994795),(-0.40335, 0.802502), (-0.5, 0.994795), (0.40335, 0.802502), (0.5,1.01941), (-0.387839, 0.790733), (-0.5, 1.01941), (0.387839,0.790733), (0.5, 1.04606), (-0.371957, 0.778179), (-0.5, 1.04606),(0.371957, 0.778179), (0.5, 1.07512), (-0.355651, 0.764732), (-0.5,1.07512), (0.355651, 0.764732), (0.5, 1.10706), (-0.338848,0.750252), (-0.5, 1.10706), (0.338848, 0.750252), (0.5, 1.14255),(-0.321458, 0.734561), (-0.5, 1.14255), (0.321458, 0.734561), (0.5,1.18246), (-0.303359, 0.71742), (-0.5, 1.18246), (0.303359, 0.71742),(0.5, 1.22807), (-0.284388, 0.698498), (-0.5, 1.22807), (0.284388,0.698498), (0.5, 1.2813), (-0.264308, 0.677316), (-0.5, 1.2813),(0.264308, 0.677316), (0.5, 1.34522), (-0.242763, 0.65314), (-0.5,1.34522), (0.242763, 0.65314), (0.5, 1.42525), (-0.219169, 0.624742),(-0.5, 1.42525), (0.219169, 0.624742), (0.5, 1.53238), (-0.192441,0.589787), (-0.5, 1.53238), (0.192441, 0.589787), (0.5, 1.695),(-0.160101, 0.542743), (-0.5, 1.695), (0.160101, 0.542743), (0.5,2.04471), (-0.112845, 0.461472), (-0.5, 2.04471), (0.112845, 0.461472)}{
        \filldraw[newblue] \y circle (0.5pt);
    }
    \end{tikzpicture}
    \caption{The zeros of $\Theta_{\Gamma_{484}}$ in $\cF_{\lambda}$.}
    \label{fig: odd zeros of theta}
\end{figure}

\begin{remark}
By symmetry, the same can be stated for $\cL_{\rho}^*$, and one can state a result of this nature for $\cC$ and $\cC^*$. Furthermore, using our arguments, one can easily deduce that the zeros on $\cU$ and $\cU^*$ satisfy some equidistribution result as in Theorem \ref{thm: Zeros Distrbution}.
\end{remark}

Our argument uses Jacobi theta functions and the modular lambda function. In \S\ref{sec: background}, we provide background on lattices, theta functions, and modular forms. In \S\ref{sec: lambda conf pro} we prove some conformal properties of the modular lambda functions and study the structure of $\Theta_{\Gamma_{4k}}$. In \S\ref{sec: zeros} we prove Theorem \ref{thm: Zeros} and  \S\ref{sec: distribution} we prove Theorem \ref{thm: Zeros Distrbution} and in \S\ref{sec: odd case} we deal with the odd case and prove Theorem \ref{thm: Zeros odd case} and Theorem \ref{thm: zeros odd case decay}.
\subsection*{Acknowledgments}
We are grateful to Ze{\'e}v Rudnick, Misha Sodin, and Adi Zilka for helpful discussions. We also extend our gratitude to SoYoung Choi for her comments on an earlier version of this manuscript.
This research was supported by the ISRAEL SCIENCE FOUNDATION (No. 2860/24).

\section{Background}\label{sec: background}
\subsection{Modular forms on $\slz$}
A modular form of weight $k$ for $\slz$ is a holomorphic function $f:\HH \to \C$ satisfying  
\begin{equation}\label{eq: modular def 1}
    f{\left(\frac{a\tau+b}{c\tau +d}\right)}={\left(c\tau+d\right)^{k}}f(\tau),\quad\forall \begin{psmallmatrix}
    a & b \\
    c & d
    \end{psmallmatrix}\in\slz,
\end{equation}
and remains bounded as $\Im\tau\to\infty$. If $f$ vanishes as $\Im\tau\to\infty$, it is called a cusp form. Due to the symmetric nature of the transformation formula \eqref{eq: modular def 1}, modular forms can be viewed as functions defined on the fundamental domain $\cF$, as seen in Figure \ref{fig: fundomain}.

\begin{remark}
We can replace \eqref{eq: modular def 1} with the following conditions:
\begin{align}
    f(\tau) & = f(\tau+1),\label{eq:1periodic}\\
    f(\tau) & = \tau^{-k}f(-1/\tau). \label{eq:inverting}
\end{align}
\end{remark}
When $k\ge 4$ and even, there exists a nonzero modular form in $M_{k}$ known as the (normalized) Eisenstein series
\begin{equation}\label{eq: esn}
    E_{k}(\tau) = \frac{1}{2}\msum{(m,n)\in\Z^{2}}{\gcd(m,n)=1}\frac{1}{\left(m\tau+n\right)^{k}}= 1 -\gamma_{k}\sum_{n=1}^{\infty}\sigma_{k-1}(n)q^{n},
\end{equation}
where $\sigma_{k-1}(n)=\sum_{d\mid n}d^{k-1}$, $\gamma_{k}=\frac{2k}{B_k}$, and $B_k$ is the $k$-th Bernoulli number.

\subsection{Modular forms on $\Gamma\pr{2}$} Let $\Gamma\pr{2}$ be the principal congruence subgroup of level $2$, i.e., 
\[\Gamma\pr{2} = \prs{\begin{pmatrix}
    a & b\\
    c &d
\end{pmatrix} \in \slz :\begin{pmatrix}
    a & b\\
    c &d
\end{pmatrix} \equiv \begin{pmatrix}
    1 & 0\\
    0 &1
\end{pmatrix}\pmod{2}}.\]
A modular form of weight $k$ for $\Gamma\pr{2}$ is a holomorphic function $f:\HH \to \C$ satisfying  
\begin{equation}\label{eq: lvl2 modular def 1}
    f{\left(\frac{a\tau+b}{c\tau +d}\right)}={\left(c\tau+d\right)^{k}}f(\tau),\quad\forall \begin{psmallmatrix}
    a & b \\
    c & d
    \end{psmallmatrix}\in\Gamma\pr{2},
\end{equation}
and the function $\pr{c\tau+d}^{-k}f{\left(\frac{a\tau+b}{c\tau +d}\right)}$ remains bounded as $\Im\tau\to\infty$, for all $\begin{psmallmatrix}
    a & b \\
    c & d
    \end{psmallmatrix}\in\slz$.
\begin{remark}
We can replace \eqref{eq: lvl2 modular def 1} with the following conditions:
\begin{align}
    f(\tau) & = f(\tau+2),\label{eq: lvl2 2periodic}\\
    f\pr{\frac{\tau}{2\tau+1}}& = \pr{2\tau+1}^{k}f\pr{\tau}. \label{eq: lvl2 inverting}
\end{align}
\end{remark}
We will denote $M_{k}\pr{2}$ for the space of modular forms for $\Gamma\pr{2}$ of weight $k$. 
As $2$-periodic functions, each $f\in M_{k}\pr{2}$, a modular form for $\Gamma\pr{2}$, has a Fourier series, given in terms of the regular nome $q= e^{2\pi i\tau}$:
\[f(\tau) = \sum_{n=n_{\infty}}^{\infty}a_{f}\pr{n}q^{n/2},\]
or in terms of the normalized nome:
\[f(\tau) = \sum_{n=n_{\infty}}^{\infty}a_{f}\pr{n}q_{2}^{n}, \quad q_{2}= e^{\pi i\tau}.\]
The subgroup $\Gamma\pr{2}$ has $3$ cusps: $\mathfrak{0}$, $\mathfrak{1}$, and $\mathfrak{\infty}$. If $\mathfrak{a}$ is a cusp and $\sigma_{\mathfrak{a}}\infty =\mathfrak{a}$ where $\sigma_{\mathfrak{a}}\in\slz$, for any $f\in M_{k}\pr{2}$ we can define the Fourier series at the cusp $\mathfrak{a}$,
\[f\pr{\sigma_{\mathfrak{a}}\tau} = \sum_{n=n_{\mathfrak{a}}}^{\infty}a_{f,\mathfrak{a}}\pr{n}q^{n/2}.\] The growth condition on $f$ as $\Im\tau\to\infty$ implies that $n_{\mathfrak{a}}$ is an integer (see \cite{Iwaniec}). We denote $\ord_{\mathfrak{a}}\pr{f}=n_{\mathfrak{a}}$ and state an analogous valence formula for $\Gamma\pr{2}$, for a given nonzero $f\in M_{k}\pr{2}$:
\begin{equation}\label{eq: lvl2 valence}
    \ord_{\infty}\pr{f}+\ord_\mathfrak{1}\pr{f}+\ord_\mathfrak{0}\pr{f}+\sum_{z\in\cF_{\lambda}}\ord_{z}\pr{f} = \frac{k}{2}.
\end{equation}
where $\cF_{\lambda}$ is the fundamental domain \[\cF_{\lambda}= \prs{\tau\in\HH: \abs{\tau+\frac{1}{2}}\ge \frac{1}{2},-1\le \Re\tau\le 0}\bigcup\prs{\tau\in\HH: \abs{\tau-\frac{1}{2}}> \frac{1}{2},0< \Re\tau< 1},\]
as demonstrated Figure \ref{fig: fundomain 2}, also see \cite{DiamondShurman}.

\begin{figure}[ht]
    \centering
    \begin{tikzpicture}[scale = 2] 
    \draw[thick,->] (-1.5,0) -- (1.5,0) node[right] {};
    \draw[fill=gray!30,draw =white] (-1,2.6) -- (-1,0) arc (180:0:0.5) arc (180:0:0.5) -- (1,2.6);
    \draw[line width=0.2mm] (-1,0) -- (-1,2.6);
    \draw[line width=0.15mm,dashed] (1,0) -- (1,2.6);
    \draw[line width=0.2mm] (0,0) arc (0:180:0.5);
    \draw[line width=0.15mm,dashed] (1,0) arc (0:180:0.5);
    \draw[loosely dashed] (-1,0) arc (180:0:1);
    \draw[loosely dashed] (0,0) arc (180:90:1);
    \draw[loosely dashed] (0,0) arc (0:90:1);
    \draw[loosely dashed] (-0.5,0.5) -- (-0.5,2.6);
    \draw[loosely dashed] (0.5,0.5) -- (0.5,2.6);
    \filldraw (0,0) circle (0.75pt) node[below] {$0$};
    \filldraw (1,0) circle (0.75pt) node[below right] {$1$};
    \filldraw (-1,0) circle (0.75pt) node[below left] {$-1$};
    \filldraw (-0.5,0) circle (0.75pt) node[below] {$-\frac{1}{2}$};
    \filldraw (0.5,0) circle (0.75pt) node[below] {$\frac{1}{2}$};
    \filldraw[gray!30] (0,1.3) circle (0.75pt) node {\color{black}$\cF_{\lambda}$};
    \end{tikzpicture}
    \caption{The fundamental domain $\cF_{\lambda}$.}
    \label{fig: fundomain 2}
\end{figure} 

In the case of $\Gamma\pr{2}$ the valence formula suggests that there are $k/2$ zeros in the fundamental domain $\cF_{\lambda}$ and at the cusps.

It is possible to give an analogous definition of an Eisenstein series for $\Gamma\pr{2}$. However, we will need modular forms of a different flavor, arising from the study of elliptic functions and elliptic integrals, called the 
\emph{Jacobi theta functions}.
The Jacobi theta functions are holomorphic functions from the upper half-plane, defined as
\begin{align}
    \theta_{2}\pr{\tau}   & = \sum_{n\in\Z}q^{\pr{n+1/2}^{2}/2},  \\
    \theta_{3}\pr{\tau}   & = \sum_{n\in\Z}q^{n^{2}/2} , \quad\quad\quad\quad  q=e^{2\pi i\tau} \\ 
    \theta_{4}\pr{\tau}   & = \sum_{n\in\Z}\pr{-1}^{n}q^{n^{2}/2}. 
\end{align}
These functions are non-vanishing on $\HH$. The functions $\theta_{2}^{4}$, $\theta_{3}^{4}$, and $\theta_{4}^{4}$ are modular forms of weight $2$ for $\Gamma\pr{2}$.

We would also need the modular lambda function, defined as
\[\lambda\pr{\tau} = \frac{\theta_2^{4}\pr{\tau}}{\theta_{3}^{4}\pr{\tau}}.\]
The $q$-expansion of $\lambda$ is
\[\lambda\pr{\tau} =\sum_{n=1}^{\infty}a\pr{n}q^{n/2} = 16q^{1/2}-128q+704q^{3/2} -3072q^2 +\ldots ,\quad q=e^{2\pi i\tau}.\]
The modular lambda function is a Hauptmodul for the modular curve $X\pr{2}$, i.e., it is invariant under the action of $\Gamma\pr{2}$ on the upper half-plane and is a homeomorphism between the fundamental domain of $\Gamma\pr{2}\backslash\HH$ and $\C\smallsetminus\prs{0,1}$. It also satisfies the following transformation formulas:
\begin{align}
    \label{eq: lam inv 1}\lambda\pr{\frac{-1}{\tau}} & = 1-\lambda\pr{\tau},\\
    \label{eq: lam inv 2}\lambda\pr{\frac{1}{1-\tau}} & = \frac{1}{1-\lambda\pr{\tau}},\\
    \label{eq: lam inv 3}\lambda\pr{\frac{\tau -1}{\tau}} & = \frac{\lambda\pr{\tau}-1}{\lambda\pr{\tau}},
\end{align}
see \cite[p. 111]{ChandEllipticFunc}. We also have the value of $\lambda$ at the cusps:
\[\lambda\pr{0} = 1,\quad \lambda\pr{1}=\infty, \quad \text{and}\quad \lambda\pr{i\infty} =0.\]  Another important fact is that $\lambda$ maps the hyperbolic triangle with angles $0$ whose vertices are $0$, $1$, and $i\infty$ to the upper half-plane (see \cite[Chapter VII, p. 118, Theorem 4]{ChandEllipticFunc}).
\begin{lemma}\label{lemma: rho to rho}
    $\lambda$ has a fixed point at $\rho =e^{i\pi/3}$.
\end{lemma}
\begin{proof} 
    We have $\rho =\frac{1}{1-\rho}= \frac{\rho-1}{\rho}$, so by \eqref{eq: lam inv 2} \[\lambda\pr{\rho}=\lambda\pr{\frac{1}{1-\rho}} = \frac{1}{1-\lambda\pr{\rho}}\] and by \eqref{eq: lam inv 3}  \[\lambda\pr{\rho}=\lambda\pr{\frac{\rho -1}{\rho}} = \frac{\lambda\pr{\rho}-1}{\lambda\pr{\rho}}=\frac{-1}{\lambda\pr{\rho}^2}.\]
    Therefore, $\lambda\pr{\rho}^3 = -1$. Hence, $\lambda\pr{\rho}\in \prs{-1,\rho,\bar{\rho}}$.
    The interior of the hyperbolic triangle with angles $0$ whose vertices are $0$, $1$, and $i\infty$ is mapped to the upper-half plane under $\lambda$ and thus $\lambda\pr{\rho}\in \HH$, which yields $\lambda\pr{\rho}=\rho$.
\end{proof}
\subsection{Lattices and their associated theta functions}
Recall that a lattice in $\Rd$ is a set $\Lambda\sub \Rd$ of the form $\Lambda =g\Z^{d}$ with $g\in \GL_{d}\pr{\R}$, its covolume is $\covol\pr{\Lambda}=\abs{\det g}$ and is independent of the choice of representative $g$. The $\Z$-dual of a lattice $\Lambda$ is the set
\[\Lambda'=\prs{x\in\Rd:\ipp{x,y}\in \Z\quad\forall y\in\Lambda},\] where $\ipp{\cdot,\cdot}$ is the Euclidean inner product. A lattice is called unimodular if $\covol\pr{\Lambda}=1$. It is called integral if $\norm{x}^2 \in\Z$ for all $x\in\Lambda$, and even if $\norm{x}^2 \in 2\Z$. Finally, a lattice is called self-dual if $\Lambda=\Lambda'$. An even lattice is unimodular if and only if it is self-dual.

Theta functions, as defined in \eqref{eq: theta fn def}, have a \say{duality formula} that relates $\Theta_{\Lambda}$ to $\Theta_{\Lambda'}$ under the transformation $
\tau\mapsto\frac{-1}{\tau}$:
\begin{equation}\label{eq: theta duality}
    \Theta_{\Lambda'}\pr{-1/\tau} = \covol\pr{\Lambda}\pr{-i\tau}^{d/2}\Theta_{\Lambda}\pr{\tau}.
\end{equation}
The duality formula connects theta functions to modular forms. As previously stated, when the lattice $\Lambda \sub\Rd$ is integral and self-dual, the theta function $\Theta_{\Lambda}$ is a modular form of weight $d/2$ of level $2$ and if $\Lambda$ is also even, $\Theta_{\Lambda}$ is a modular form of weight $d/2$ for $\slz$. 

\section{The structure of $\Theta_{\Gamma_{8k}}$ and their $\lambda$-zeros}\label{sec: lambda conf pro}
We begin by studying the structure of $\Theta_{\Gamma_{8k}}$, showing two representations of $\Theta_{\Gamma_{8k}}$.
\subsection{A couple representations of $\Theta_{\Gamma_{8k}}$}
Our first representation connects $\Theta_{\Gamma_{8k}}$ to Jacobi theta functions.
\begin{lemma}\label{prop: General Jacobi formula}
    For any $n\ge 1$, we have 
    \begin{equation}\label{eq: jacobi rep}
        \Theta_{\Gamma_{n}} =\frac{1}{2}\pr{ \theta_{2}^{n}+\theta_{3}^{n}+\theta_{4}^{n}}.
    \end{equation}
\end{lemma}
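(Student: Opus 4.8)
The plan is to unfold the definition \eqref{eq: lattice def} directly, splitting $\Gamma_{8k}$ into its two constituent cosets. Writing $q=e^{2\pi i\tau}$, so that $e^{\pi i\tau\norm{x}^{2}}=q^{\norm{x}^{2}/2}$, I would separate
\[
\Theta_{\Gamma_{8k}}(\tau)=\msum{x\in\Z^{8k}}{2\mid\sum_i x_i}q^{\norm{x}^{2}/2}\;+\;\msum{x\in(\Z+1/2)^{8k}}{2\mid\sum_i x_i}q^{\norm{x}^{2}/2}.
\]
The key device for both pieces is the parity indicator $\ind\prb{2\mid\textstyle\sum_i x_i}=\tfrac12\pr{1+(-1)^{\sum_i x_i}}$, which linearizes the congruence constraint and lets each sum factor over the $8k$ coordinates.

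For the integer coset, inserting the indicator yields $\tfrac12\sum_{x\in\Z^{8k}}q^{\norm{x}^{2}/2}+\tfrac12\sum_{x\in\Z^{8k}}(-1)^{\sum_i x_i}q^{\norm{x}^{2}/2}$. Since $\norm{x}^{2}=\sum_i x_i^{2}$ and $(-1)^{\sum_i x_i}=\prod_i(-1)^{x_i}$, each sum factors into an $8k$-th power of a one-dimensional theta sum; by the definitions of $\theta_{3}$ and $\theta_{4}$ these become $\tfrac12\pr{\theta_{3}^{8k}+\theta_{4}^{8k}}$.

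For the half-integer coset I would write $x_i=n_i+\tfrac12$ with $n_i\in\Z$; then $\sum_i x_i=\sum_i n_i+4k$, and since $4k$ is even the parity constraint is equivalent to $2\mid\sum_i n_i$. Inserting $\tfrac12\pr{1+(-1)^{\sum_i n_i}}$ again factors the sum: the trivial-weight term gives $\tfrac12\theta_{2}^{8k}$. The crux is the remaining alternating term $\prod_{i=1}^{8k}\pr{\sum_{n\in\Z}(-1)^{n}q^{(n+1/2)^{2}/2}}$, and the main point is to show each factor vanishes. The involution $n\mapsto -1-n$ is fixed-point-free on $\Z$ (as $2n=-1$ has no solution), preserves $(n+\tfrac12)^{2}$, and flips the sign $(-1)^{n}$, so the one-dimensional sum cancels in pairs and equals $0$. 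Hence the alternating half-integer term contributes nothing, and the half-integer coset yields exactly $\tfrac12\theta_{2}^{8k}$.

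Adding the two contributions gives $\tfrac12\pr{\theta_{2}^{8k}+\theta_{3}^{8k}+\theta_{4}^{8k}}$, as claimed. The only genuinely non-routine step is the vanishing of the alternating half-integer theta sum via the pairing $n\leftrightarrow-1-n$; everything else is bookkeeping with the coordinate factorization, and no convergence issues arise since all series converge absolutely for $\tau\in\HH$.
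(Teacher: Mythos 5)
Your proof is correct, but it takes a genuinely different route from the paper. You argue termwise: split $\Gamma_{8k}$ into its integer and half-integer cosets, linearize the congruence condition with the indicator $\tfrac12\pr{1+(-1)^{\sum_i x_i}}$, factor each resulting sum over coordinates (justified by absolute convergence), and kill the alternating half-integer term via the fixed-point-free involution $n\mapsto -1-n$ --- i.e., the classical fact that $\sum_{n\in\Z}(-1)^n q^{(n+1/2)^2/2}=0$. This yields the identity coefficient-by-coefficient, with no input from the theory of modular forms. The paper instead proves much less directly: it shows that only integral vectors of $\Gamma_{8k}$ have norm below $2k$, so the first $\ell+1=\dim M_{4k}$ Fourier coefficients of $\Theta_{\Gamma_{8k}}$ and of $\tfrac12\pr{\theta_2^{8k}+\theta_3^{8k}+\theta_4^{8k}}$ agree; it then verifies via the theta transformation formulas that the right-hand side is a modular form of weight $4k$ for $\slz$, and concludes equality from the valence formula (a weight-$4k$ form vanishing to order $\ge\ell+1$ at $\infty$ is zero). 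Your approach is more elementary and gives the exact counting identity for every coefficient at once; the paper's approach trades the cancellation argument for soft modularity machinery, only ever needing to inspect finitely many coefficients, and its verification that the right-hand side transforms correctly under $\slz$ is a fact of independent use. Both are complete proofs; yours is essentially the classical direct derivation (as in Conway--Sloane), which the paper's remark acknowledges as known.
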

\begin{proof} This is a known fact; see for example \cite[p. 119-120, eq. $(94)$]{ConwaySloane}. The proof is straightforward; however, as we could not locate the full derivation in any standard text, we provide the details in the appendix.
\end{proof}

A well-known identity, which is due to Jacobi (see \cite[p.\ 103, eq.\ (3.10)]{ChandEllipticFunc}), gives us a parametrization of the Fermat curve in terms of Jacobi theta functions:
\begin{equation}\label{eq: Jacobi's identity}
    \theta_{3}^{4}=\theta_{2}^{4}+\theta_{4}^{4}.
\end{equation}
Using \eqref{eq: Jacobi's identity} and factoring $\theta_{3}^{4k}$ from the right-hand side of \eqref{eq: jacobi rep} yields
\begin{equation}\label{eq: lambda rep}
    \Theta_{\Gamma_{4k}} =\frac{\theta_3^{4k}}{2}\pr{1+\lambda^{k}+\pr{1-\lambda}^{k}}.
\end{equation}
The function $\theta_{3}^{4k}$ is non-vanishing on $\HH$. Hence, the zeros of $\Theta_{\Gamma_{4k}}$ are the \say{$\lambda$-zeros} of the polynomial $p_{k}$, where we denote $p_{k}\pr{z}=1+z^k+\pr{1-z}^k$. Our goal now is to study the roots of the polynomial $p_{k}$ and their pre-image under $\lambda$. The polynomials $p_{k}$ are closely tied to the Cauchy-Mirimanoff polynomials, whose Galois group and roots were studied by Helou \cite{Helou} and later by Nanninga \cite{Nanninga}. They display a strong pattern (see Figure \ref{fig: poly roots}).

\begin{figure}[ht]
    \centering
    \begin{subfigure}[b]{0.3\textwidth}
    \begin{tikzpicture}[scale = 1] 
        \definecolor{newblue}{RGB}{26, 128, 187};
        \draw[gray,thick,->] (-1.2,0) -- (2.2,0) node[right] {};
        \draw[gray,thick,->] (0,-3) -- (0,3) node[right] {};
        \draw[gray, line width = 0.2mm] (0.5,-3) -- (0.5,3);
        \draw[line width=0.2mm, gray] (1,0) circle (1);
        \draw[line width=0.2mm, gray] (0,0) circle (1);
        \filldraw[gray] (2,0) circle (0.75pt) node[below right] {\tiny$2$};
        \filldraw[gray] (1,0) circle (0.75pt) node[below right] {\tiny$1$};
        \filldraw[gray] (-1,0) circle (0.75pt) node[below left] {\tiny$-1$};
        \filldraw[gray] (0,0) circle (0.75pt) node[below left] {\tiny$0$};
        \filldraw[gray] (0,2) circle (0.75pt) node[ left] {\tiny$2$};
        \filldraw[gray] (0,1) circle (0.75pt) node[above left] {\tiny$1$};
        \filldraw[gray] (0,-1) circle (0.75pt) node[below left] {\tiny$-1$};
        \filldraw[gray] (0,-2) circle (0.75pt) node[ left] {\tiny$-2$};
        \filldraw[gray] (-60:1) circle (0.75pt) node[below right] {\tiny$\bar{\rho}$};
        \filldraw[gray] (60:1) circle (0.75pt) node[above right] {\tiny$\rho$};
        \foreach \point in {(0.5, 0.866025), (0.5, -0.866025), (0.5, -2.08265), (0.5, 2.08265), (0.5, -1.2067), (0.5, 1.2067), (0.293062, 0.707276), (0.706938, -0.707276), (0.293062, -0.707276), (0.706938, 0.707276), (0.108993, 0.45399), (0.891007, -0.45399), (0.108993, -0.45399), (0.891007, 0.45399), (0.0123117, 0.156434), (0.987688, -0.156434), (0.0123117, -0.156434), (0.987688,0.156434)}{
            \filldraw[newblue] \point circle (1pt);
        }
    \end{tikzpicture}
    \end{subfigure}
    \begin{subfigure}[b]{0.3\textwidth}
    \begin{tikzpicture}[scale = 1] 
        \definecolor{newblue}{RGB}{26, 128, 187};
        \draw[gray,thick,->] (-1.2,0) -- (2.2,0) node[right] {};
        \draw[gray,thick,->] (0,-3) -- (0,3) node[right] {};
        \draw[gray, line width = 0.2mm] (0.5,-3) -- (0.5,3);
        \draw[line width=0.2mm, gray] (1,0) circle (1);
        \draw[line width=0.2mm, gray] (0,0) circle (1);
        \filldraw[gray] (2,0) circle (0.75pt) node[below right] {\tiny$2$};
        \filldraw[gray] (1,0) circle (0.75pt) node[below right] {\tiny$1$};
        \filldraw[gray] (-1,0) circle (0.75pt) node[below left] {\tiny$-1$};
        \filldraw[gray] (0,0) circle (0.75pt) node[below left] {\tiny$0$};
        \filldraw[gray] (0,2) circle (0.75pt) node[ left] {\tiny$2$};
        \filldraw[gray] (0,1) circle (0.75pt) node[above left] {\tiny$1$};
        \filldraw[gray] (0,-1) circle (0.75pt) node[below left] {\tiny$-1$};
        \filldraw[gray] (0,-2) circle (0.75pt) node[ left] {\tiny$-2$};
        \filldraw[gray] (-60:1) circle (0.75pt) node[below right] {\tiny$\bar{\rho}$};
        \filldraw[gray] (60:1) circle (0.75pt) node[above right] {\tiny$\rho$};
        
        \foreach \point in {(0.5, 0.8660254037844386), (0.5, -0.8660254037844386), (0.5, -2.236871414605777), (0.5, 2.236871414605777), (0.5, -1.7210112883346231), (0.5, 1.7210112883346231), (0.5, -1.3888034268047544), (0.5, 1.3888034268047544), (0.5, -1.1554321432952352), (0.5, 1.1554321432952352), (0.5, -0.9811093281559106), (0.5, 0.9811093281559106), (0.4123454533856929, 0.809111941478763), (0.587654546614307, -0.809111941478763), (0.4123454533856929, -0.809111941478763), (0.587654546614307, 0.809111941478763), (0.31545274857681005, 0.7289684907929533), (0.68454725142319, -0.7289684907929533), (0.31545274857681005, -0.7289684907929533), (0.68454725142319, 0.7289684907929533), (0.22948675726889445, 0.637423989802703), (0.7705132427311056, -0.637423989802703), (0.22948675726889445, -0.637423989802703), (0.7705132427311056, 0.637423989802703), (0.15567207449798265, 0.535826794978993), (0.8443279255020173, -0.535826794978993), (0.15567207449798265, -0.535826794978993), (0.8443279255020173, 0.535826794978993), (0.09517294753398042, 0.42577929156507255), (0.9048270524660196, -0.42577929156507255), (0.09517294753398042, -0.42577929156507255), (0.9048270524660196, 0.42577929156507255), (0.04894348370484647,   0.30901699437494734), (0.9510565162951535, -0.30901699437494734), (0.04894348370484647, -0.30901699437494734), (0.9510565162951535, 0.30901699437494734), (0.01771274927131128, 0.18738131458572463), (0.9822872507286887, -0.18738131458572463), (0.01771274927131128, -0.18738131458572463), (0.9822872507286887, 0.18738131458572463),(0.001973271571728441, 0.06279051952931339), (0.9980267284282716, -0.06279051952931339), (0.001973271571728441, -0.06279051952931339),(0.9980267284282716, 0.06279051952931339)}{
            \filldraw[newblue] \point circle (1pt);
        }
    \end{tikzpicture}
    \end{subfigure}
    \begin{subfigure}[b]{0.3\textwidth}
    \begin{tikzpicture}[scale = 1] 
        \definecolor{newblue}{RGB}{26, 128, 187};
        \draw[gray,thick,->] (-1.2,0) -- (2.2,0) node[right] {};
        \draw[gray,thick,->] (0,-3) -- (0,3) node[right] {};
        \draw[gray, line width = 0.2mm] (0.5,-3) -- (0.5,3);
        \draw[line width=0.2mm, gray] (1,0) circle (1);
        \draw[line width=0.2mm, gray] (0,0) circle (1);
        \filldraw[gray] (2,0) circle (0.75pt) node[below right] {\tiny$2$};
        \filldraw[gray] (1,0) circle (0.75pt) node[below right] {\tiny$1$};
        \filldraw[gray] (-1,0) circle (0.75pt) node[below left] {\tiny$-1$};
        \filldraw[gray] (0,0) circle (0.75pt) node[below left] {\tiny$0$};
        \filldraw[gray] (0,2) circle (0.75pt) node[ left] {\tiny$2$};
        \filldraw[gray] (0,1) circle (0.75pt) node[above left] {\tiny$1$};
        \filldraw[gray] (0,-1) circle (0.75pt) node[below left] {\tiny$-1$};
        \filldraw[gray] (0,-2) circle (0.75pt) node[ left] {\tiny$-2$};
        \filldraw[gray] (-60:1) circle (0.75pt) node[below right] {\tiny$\bar{\rho}$};
        \filldraw[gray] (60:1) circle (0.75pt) node[above right] {\tiny$\rho$};
        
        \foreach \point in {(0.5, 0.866025), (0.5, 0.866025), (0.5, -0.866025), (0.5, -0.866025), (0.5, -2.86487), (0.5, 2.86487), (0.5, -2.41441), (0.5, 2.41441), (0.5, -2.08265), (0.5, 2.08265), (0.5, -1.82769), (0.5, 1.82769), (0.5, -1.62528), (0.5, 1.62528), (0.5, -1.46038), (0.5, 1.46038), (0.5, -1.32321), (0.5, 1.32321), (0.5, -1.20711), (0.5, 1.20711), (0.5, -1.10738), (0.5, 1.10738), (0.5, -1.02063), (0.5, 1.02063), (0.5, -0.944351), (0.5, 0.944351), (0.437906, 0.827073), (0.562094, -0.827073), (0.437906, -0.827073), (0.562094, 0.827073), (0.387093, 0.790155), (0.612907, -0.790155), (0.387093, -0.790155), (0.612907, 0.790155), (0.338688, 0.750111), (0.661312, -0.750111), (0.338688, -0.750111), (0.661312, 0.750111), (0.292893, 0.707107), (0.707107, -0.707107), (0.292893, -0.707107), (0.707107, 0.707107), (0.249889, 0.661312), (0.750111, -0.661312), (0.249889, -0.661312), (0.750111, 0.661312), (0.209845, 0.612907), (0.790155, -0.612907), (0.209845, -0.612907), (0.790155, 0.612907), (0.172919, 0.562083), (0.827081, -0.562083), (0.172919, -0.562083), (0.827081, 0.562083), (0.139258, 0.509041), (0.860742, -0.509041), (0.139258, -0.509041), (0.860742, 0.509041), (0.108993, 0.45399), (0.891007, -0.45399), (0.108993, -0.45399), (0.891007, 0.45399), (0.0822454, 0.397148), (0.917755, -0.397148), (0.0822454, -0.397148), (0.917755,0.397148), (0.0591192, 0.338738), (0.940881, -0.338738), (0.0591192, -0.338738), (0.940881,0.338738), (0.0397063, 0.278991), (0.960294, -0.278991), (0.0397063, -0.278991), (0.960294,0.278991), (0.0240832, 0.218143), (0.975917, -0.218143), (0.0240832, -0.218143), (0.975917,0.218143), (0.0123117, 0.156434), (0.987688, -0.156434), (0.0123117, -0.156434), (0.987688,0.156434), (0.00443804, 0.0941083), (0.995562, -0.0941083), (0.00443804, -0.0941083), (0.995562, 0.0941083), (0.00049344, 0.0314108), (0.999507, -0.0314108), (0.00049344, -0.0314108), (0.999507, 0.0314108)}{
            \filldraw[newblue] \point circle (1pt);
        }
    \end{tikzpicture}
    \end{subfigure}
    \caption{The roots of $p_{k}$ where $k\in\prs{20,50,100}$}
    \label{fig: poly roots}
\end{figure}

We will now prove several propositions: First, we will study the zeros of the polynomials $p_{k}$. Then, we will exploit conformal properties of $\lambda$ to study how the geodesics $\cL_{\rho}$, $\cL_{\rho}^*$, $\cC$, $\cC^{*}$, $\cU$, and $\cU^*$ behave under $\lambda$.
\subsection{The zeros of the polynomial $p_{k}$} Recall we denote
\[p_{k}\pr{z}=1+z^k+\pr{1-z}^k.\]
We will need the auxiliary polynomial \[q_{k}\pr{z}=1+z^k +\pr{z-1}^{k} = 1+z^{k}+\pr{-1}^{k}\pr{1-z}^{k}.\]
The polynomials $p_{k}$ and $q_{k}$ satisfy:
    \begin{equation}\label{eq: poly stable 1}
        p_{k}\pr{\frac{z-1}{z}} = 1 + \pr{\frac{z-1}{z}}^{k} + \pr{1-\frac{z-1}{z}}^{k} =
       \frac{z^{k}+\pr{z-1}^{k} + 1}{z^{k}} = \frac{q_{k}\pr{z}}{z^k}.
    \end{equation}

We will show that $q_{k}$ has at least $\floor{\frac{k}{2}}-\ceil{\frac{k}{3}}$ zeros on the arc $\cC = \prs{1+e^{i\varphi}:\varphi\in\left(\frac{2\pi}{3},\pi\right)}$, and that the zeros are equidistributed.
\begin{remark}
    The results regarding the location of the zeros of $p_{k}$ and $q_{k}$ are essentially known; they follow from the results on the Cauchy-Mirimanoff polynomials proven by Helou and later by Nanninga in \cite{Helou, Nanninga} respectively. However, we state and prove them for completeness, as we rely heavily on the lemmata below.
\end{remark}

\begin{prop}\label{prop: q polyzeros}
    For any $k\ge 1$, the polynomial $q_{k}\pr{z}=1+z^{k}+\pr{z-1}^{k}$ has at least $d=\floor{\frac{k}{2}}-\ceil{\frac{k}{3}}$ zeros on the arc $\cC = \prs{1+e^{i\varphi}:\varphi\in\pr{\frac{2\pi}{3},\pi}}$. Furthermore, there exist $\varphi_{k,1},\ldots ,\varphi_{k,d} \in \pr{\frac{2\pi}{3},\pi}$ such that $q_{k}\pr{1+e^{i\varphi_{j,k}}}=0$ for any $1\le j\le d$, and: 
    \begin{enumerate}[label = (\roman*)]
        \item For any $1\le j\le d$ we have $\varphi_{k,j}\in\pr{\frac{2\pi}{k}\pr{\ceil{\frac{k}{3}}+j-1},\frac{2\pi}{k}\pr{\ceil{\frac{k}{3}}+j}}$.
        \item The zeros become equidistributed on $\text{cl}\pr{\cC}$ as $k\to\infty$, i.e., for all $\prb{a,b}\sub\prb{\frac{2\pi}{3},\pi}$ we have 
        \[\frac{\#\prs{1\le j\le d:\varphi_{j,k}\in\prb{a,b}}}{d} \xrightarrow[]{k\to\infty}\frac{{b-a}}{\pi-\frac{2\pi}{3}}.\]
    \end{enumerate}
\end{prop}
\begin{proof}
First, denote
\begin{multline}\label{eq: aux fn}
    f_{k}\pr{\varphi}=e^{-ik\varphi/2}q_{k}\pr{1+e^{i\varphi}}\\=e^{-ik\varphi/2}+e^{-ik\varphi/2}\pr{1+e^{i\varphi}}^{k}+e^{-ik\varphi/2}\pr{\pr{1+e^{i\varphi}}-1}^{k}= 2\cos\pr{k\varphi/2}+\pr{2\cos\pr{\varphi/2}}^{k}.
\end{multline}
Therefore, $f_{k}$ is real-valued and continuous. 
For any $\varphi\in \prb{\frac{2\pi}{3},\pi}$, we have $0\le \cos\pr{\varphi/2}\le \frac{1}{2}$. 
Hence, for all  $\varphi\in \prb{\frac{2\pi}{3},\pi}$ we have
\begin{equation}\label{eq: ineq almost cos}
    \abs{f_{k}\pr{\varphi}-2\cos\pr{k\varphi/2}}= \abs{2\cos\pr{\varphi/2}}^{2k} \le 1.
\end{equation}
As $\varphi$ increases from $\frac{2\pi}{3}$ to $\pi$, the parameter $k\varphi$ passes through exactly $\floor{\frac{k}{2}} -\ceil{\frac{k}{3}} +1 = d+1$ integer multiples of $\pi$. The least integer multiple of $\pi$ in the interval $\prb{\pi k/3,\pi k/2}$ is $\pi\ceil{\frac{k}{3}}$. Let $\pi r$ and $\pi \pr{r+1}$ be consecutive multiples of $\pi$ in $\prb{\pi k/3,\pi k/2}$. If $r$ is even, by \eqref{eq: ineq almost cos} we have 
\[f_{k}\pr{\frac{2\pi r}{k}} \ge 2\cos\pr{\frac{k}{2}\cdot\frac{2\pi r}{k}}-1=2\pr{-1}^r +1 =1\]
and \[f_{k}\pr{\frac{2\pi\pr{r+1}}{k}} \le 2\cos\pr{\frac{k}{2}\cdot\frac{2\pi\pr{r+1}}{k}}+1=2\pr{-1}^{r+1} +1 = -1.\]
Similarly, if $r$ is odd, we have $f_{k}\pr{\frac{\pi r}{k}} < -1$ and $f_{k}\pr{\frac{\pi\pr{r+1}}{k}}>1$. In any case, by the intermediate value theorem, there exists $\varphi\in\pr{{\frac{\pi{r}}{k}},{\frac{\pi\pr{r+1}}{k}}}$ such that $f_{k}\pr{\varphi} =0$, and therefore $q_{k}\pr{1+e^{i\varphi}}=0$.
Hence, for any $1\le j\le d$ there exists $\varphi_{k,j}\in\pr{\frac{2\pi}{k}\pr{\ceil{\frac{k}{3}}+j-1},\frac{2\pi}{k}\pr{\ceil{\frac{k}{3}}+j}}$ such that $q_{k}\pr{1+e^{i\varphi_{k,j}}}=0$.

We have proven (i), as required; we are left to prove (ii).
Let $\prb{a,b}\sub\prb{\frac{2\pi}{3},\pi}$. When $\varphi$ increases from $a$ to $b$, the parameter $k\varphi/2$ increases from $ka/2$ to $kb/2$ and passes through exactly $\floor{\frac{kb}{2\pi}}-\ceil{\frac{ka}{2\pi}} +1$ integer multiples of $\pi$. The number of $\varphi_{k,j}$ in the interval $\prb{a,b}$ is one less than the number of sign changes in the interval, which is exactly $\floor{\frac{kb}{2\pi}}-\ceil{\frac{ka}{2\pi}}$, since $\varphi_{k,j} \in \prb{a,b}$ if and only if $k\varphi_{k,j}/2\in\prb{ka/2, kb/2}$.
Thus,
\begin{equation*}
    \#\prs{1\le j\le d:\varphi_{k,j}\in\prb{a,b}} =\floor{\frac{kb}{2\pi}}-\ceil{\frac{ka}{2\pi}}={\frac{k\pr{b-a}}{2\pi}}+O\pr{1}.    
\end{equation*}
Also note that $ d = \floor{\frac{k}{2}}-\ceil{\frac{k}{3}} = \frac{k}{6}\pr{1+O\pr{\frac{1}{k}}}$.
Hence,
\begin{multline*}
    \frac{\#\prs{1\le j\le d:\varphi_{k,j}\in\prb{a,b}}}{d} = \frac{k\pr{b-a}}{2\pi d}+O\pr{\frac{1}{d}} \\ =\frac{k}{2\pi \frac{k}{6}\pr{1+O\pr{\frac{1}{k}}}}\pr{b-a}+O\pr{\frac{1}{k}} \xrightarrow[]{k\to\infty}\frac{3\pr{b-a}}{\pi}=\frac{b-a}{\pi-\frac{2\pi}{3}}.
\end{multline*}
Therefore, the zeros become equidistributed on $\cC$ as $k\to\infty$.
\end{proof}

For the even case, the proposition above is all we need, as $p_{2k}=q_{2k}$.
In fact, a corollary of Proposition \ref{prop: q polyzeros} is that the zeros of $p_{2k}$ are always on the line $\Re z=\frac{1}{2}$ and are on the arc $\prs{e^{i\varphi}:\varphi\in\prb{-\frac{\pi}{3},\frac{\pi}{3}}}$, and the arc $\prs{1+e^{i\varphi}:\varphi\in\prb{\frac{2\pi}{3},\frac{4\pi}{3}}}$. While we do not use this in our argument for the even case, the following proposition provides valuable context for the odd case and an explanation for the images we see in Figure \ref{fig: poly roots}:
\begin{prop}\label{prop: full polyzeros}
     The polynomial $p_{2k}$ has $\ell = k-\ceil{\frac{2k}{3}} = \floor{\frac{k}{3}}$ simple zeros on each of $\cL_{\rho}$, $\overline{\cL_{\rho}}$, $\cC$, $\overline{\cC}$,  $\cU$, and $\overline{\cU}$. Additionally, there exists a zero of multiplicity $\frac{k'}{4}$ at $\rho$ and $\bar{\rho}$. Here $\cL_{\rho}$, $\cC$, and $\cU$ are as defined in \eqref{eq: line 0.5}, \eqref{eq: the arc center 1}, and \eqref{eq: unit arc}, $\overline{A}=\prs{\bar{w}:w\in A}$, and we write $4k=12\ell + k'$. 
\end{prop}

\begin{proof}
    Recall that by \eqref{eq: poly stable 1} for all $z\in\C\smallsetminus\prs{0}$  
    \begin{equation}\label{eq: poly stable even}
        p_{2k}\pr{\frac{z-1}{z}} = \frac{q_{2k}\pr{z}}{z^{2k}} = \frac{p_{2k}\pr{z}}{z^{2k}},
    \end{equation}
    Under the map $z\mapsto \frac{z-1}{z}$, the arc $\cC$ maps to the line $\cL_{\rho}$ (see Figure \ref{fig: Mobius arc C to line}).
    \begin{figure}[ht]
    \centering
    \begin{subfigure}[b]{0.4\textwidth}
    \begin{tikzpicture}[scale = 1.3] 
        \definecolor{newblue}{RGB}{26, 128, 187};
        \definecolor{newred}{RGB}{193, 39, 45};
        \draw[gray,thick,->] (-1.2,0) -- (2.2,0) node[right] {};
        \draw[gray,thick,->] (0,-1.2) -- (0,2.2) node[right] {};
        \draw[gray, line width = 0.2mm] (0.5,-1.2) -- (0.5,2.2);
        \draw[line width=0.2mm, gray] (1,0) circle (1);
        \draw[line width=0.2mm, gray] (0,0) circle (1);
        \draw[line width =0.3mm, newred] (0,0) arc (180:120:1);
        \draw[thin, newred,->] ([shift=(140:0.9)]1,0) arc (140:160:0.9);
        \filldraw[gray] (2,0) circle (0.75pt) node[below right] {\tiny$2$};
        \filldraw[gray] (1,0) circle (0.75pt) node[below right] {\tiny$1$};
        \filldraw[gray] (-1,0) circle (0.75pt) node[below left] {\tiny$-1$};
        \filldraw[gray] (0,0) circle (0.75pt) node[below left] {\tiny$0$};
        \filldraw[gray] (0,2) circle (0.75pt) node[ left] {\tiny$2$};
        \filldraw[gray] (0,1) circle (0.75pt) node[above left] {\tiny$1$};
        \filldraw[gray] (0,-1) circle (0.75pt) node[below left] {\tiny$-1$};
        \filldraw[gray] (-60:1) circle (0.75pt) node[below right] {\tiny$\bar{\rho}$};
        \filldraw[gray] (60:1) circle (0.75pt) node[above right] {\tiny$\rho$};
        \node at (2.5,0.5)  [above]{$z\mapsto\frac{z-1}{z}$};
    \end{tikzpicture}
    \end{subfigure}
    \begin{subfigure}[b]{0.4\textwidth}
    \begin{tikzpicture}[scale = 1.3] 
        \definecolor{newblue}{RGB}{26, 128, 187};
        \definecolor{newred}{RGB}{193, 39, 45};
        \draw[gray,thick,->] (-1.2,0) -- (2.2,0) node[right] {};
        \draw[gray,thick,->] (0,-1.2) -- (0,2.2) node[right] {};
        \draw[gray, line width = 0.2mm] (0.5,-1.2) -- (0.5,2.2);
        \draw[line width = 0.3mm,newred] (60:1) -- (0.5,2.2);
        \draw[thin ,newred,->] (0.4,1.35) -- (0.4,1.65);
        \draw[line width=0.2mm, gray] (1,0) circle (1);
        \draw[line width=0.2mm, gray] (0,0) circle (1);
        \filldraw[gray] (2,0) circle (0.75pt) node[below right] {\tiny$2$};
        \filldraw[gray] (1,0) circle (0.75pt) node[below right] {\tiny$1$};
        \filldraw[gray] (-1,0) circle (0.75pt) node[below left] {\tiny$-1$};
        \filldraw[gray] (0,0) circle (0.75pt) node[below left] {\tiny$0$};
        \filldraw[gray] (0,2) circle (0.75pt) node[ left] {\tiny$2$};
        \filldraw[gray] (0,1) circle (0.75pt) node[above left] {\tiny$1$};
        \filldraw[gray] (0,-1) circle (0.75pt) node[below left] {\tiny$-1$};
        \filldraw[gray] (-60:1) circle (0.75pt) node[below right] {\tiny$\bar{\rho}$};
        \filldraw[gray] (60:1) circle (0.75pt) node[above right] {\tiny$\rho$};
    \end{tikzpicture}
    \end{subfigure}
    \caption{The mapping of the arc $\cC$ under the map $z\mapsto\frac{z-1}{z}$.}
    \label{fig: Mobius arc C to line}
\end{figure}
By \eqref{eq: poly stable even}, the zero set of $p_{2k}$ is stable under the transformation $z\mapsto\frac{z-1}{z}$, and since for $q_{2k}=p_{2k}$ there exist $\ell$ zeros on the arc $\cC$ by Proposition \ref{prop: q polyzeros}, there exist $\ell$ zeros on the line $\cL_{\rho}$. 

We also have $p_{2k}\pr{1-z} = p_{2k}\pr{z}$ so the zero set of $p_{2k}$ is stable under the transformation $z\mapsto 1-z$. Under the the transformation $z\mapsto 1-z$, the arc $\cC$ is mapped to $\cU$. Again, since there are $\ell$ zeros on the arc $\cC$, we can deduce that there are $\ell$ zeros on the arc $\cU$.

In addition, $p_{2k}$ has real coefficients, and the zeros on the arcs and lines above are non-real. Hence, by conjugating the zeros, there exist $\ell$ zeros on each of the arcs  $\overline{\cL_{\rho}}$, $\overline{\cC}$, and $\overline{\cU}$. Lastly, we have 
\[p_{2k}\pr{\rho} = 1+ \rho^{2k}+\pr{1-\rho}^{2k} = 1+2\cos\pr{\frac{2\pi k}{3}}=\begin{cases}
    3, & k'=0,\\
    0, & k'=4,8,
\end{cases}\]
and 
\[p_{2k}'\pr{\rho} = 2k\rho^{2k-1}-2k\pr{1-\rho}^{2k-1} = i4k\sin\pr{\frac{2\pi k}{3}-\frac{\pi}{3}}=\begin{cases}
    -i2\sqrt{3}k, & k'=0,\\
    i2\sqrt{3}k, & k'=4,\\
    0, & k'=8.
\end{cases}\]
Hence, there exists a zero of multiplicity $\frac{k'}{4}$ at $\rho$, and by conjugation, at $\bar{\rho}$ as well.
Finally, $2k = 6\ell +\frac{k'}{4}+\frac{k'}{4}$, and therefore, the zeros we found above account for all of the $2k$ zeros of $p_{2k}$. 
\end{proof}

We now turn to the odd case:

From the proof we get that $p_{2k}$ has simple zeros except for $\rho$, the same is true for odd $p_{k}$ with odd $k$:
\begin{lemma}\label{lemma: simplicity}
    For any odd $k\ge 3$, the zeros of $p_{k}$ are simple.
\end{lemma}
\begin{proof}
    First, observe that \[ p_k\pr{\rho} = p_{k}\pr{\bar{\rho}} = 1+ \rho^{k}+\pr{1-\rho}^{k} = 1+2\cos\pr{\frac{\pi k}{3}} = 0 \iff k\equiv2,4\bmod{6},\]
    in particular, $k$ is even. Now, suppose $p_{k}$ has a non-simple zero and let $w\in \C$ be a non-simple zero of $p_{k}$, i.e. $p_{k}\pr{w} =p_{k}'\pr{w}= 0$. We claim that $w\in\prs{\rho,\bar{\rho}}$:
    Indeed, we have 
    \[0=p_{k}'\pr{w}=kw^{k-1}-k\pr{1-w}^{k-1}.\]
    Solving the equation above, we get $w = \frac{1}{1+e^{\frac{2\pi ij}{k-1}}}$ for some $j\in \Z$ which shows $\Re\pr{w} = \frac{1}{2}$. On the other hand, $\abs{w} = 1$: We have \[0=p_{k}\pr{z}=1+w^k + \pr{1-w}^{k} = 1+ w^{k-1}\pr{w+\pr{1-w}} = 1+w^{k-1}.\] Therefore, $\abs{w} = 1$ and $\Re w= \frac{1}{2}$ which shows $w\in \prs{\rho,\bar{\rho}}$ and by our observation $k$ is even. Hence, for odd $k$, any zero of $p_{k}$ must be simple.
\end{proof}

Another result in the odd case, that resembles Proposition \ref{prop: full polyzeros}, is that there are always $\ell$ zeros on the line $\Re z =\frac{1}{2}$. However, as we will soon see, the zeros of $p_{2k+1}$ are not on the arcs as in the even case, only exponentially close.
We begin with the following: 
\begin{lemma}\label{lemma: thm 1.3 lemma}
    For any $k\ge 1$, for the polynomial $p_{2k+1}$, there are at least $\ell$ distinct zeros on each of the lines $\cL_{\rho}$ and $\overline{\cL_{\rho}}$, and no zeros on the arcs $\cC$, $\overline{\cC}$, $\cU$, and $\overline{\cU}$. Here, we write $4k+2 = 12\ell + k'$ with $k'\in\prs{6,10,14}$.
\end{lemma}

\begin{proof}
As in the proof of Proposition \ref{prop: full polyzeros}, by \eqref{eq: poly stable 1}, for all $z\in\C\smallsetminus\prs{0}$  
    \begin{equation}\label{eq: poly stable odd}
        p_{2k+1}\pr{\frac{z-1}{z}} = \frac{q_{2k+1}\pr{z}}{z^{2k+1}},
    \end{equation}
    and recall that the line $\cL_{\rho}$ is the image of the arc $\cC$ under the map $z\mapsto \frac{z-1}{z}$. Hence, any zero of $q_{k}$ on the arc $\cC$ accounts for exactly one zero on the line $\cL_{\rho}$ as the map $z\mapsto \frac{z-1}{z}$ is injective. By proposition \ref{prop: q polyzeros}, the polynomial $q_{2k+1}$ has $\floor{\frac{2k+1}{2}} - \ceil{\frac{2k+1}{3}} = \ell$ distinct zeros on $\cC$, thus, $p_{2k+1}$ has at least $\ell$ distinct zeros on $\cL_{\rho}$. The same is true for $\overline{\cL_{\rho}}$ as the coefficients of $p_{k}$ are real, and the zero set is stable under conjugation.

    To show that there are no zeros on the arcs $\cC$, $\overline{\cC}$, $\cU$, and $\overline{\cU}$ it suffices to show that there are no zeros on the arc $\cC$, as the the zero set is stable under conjugation and reflection along the line $\Re z=\frac{1}{2}$. 
    
    Consider the function \[f_{k}\pr{\varphi} = e^{-i\pr{2k+1}\frac{\varphi}{2}}p_{2k+1}\pr{1+e^{i\varphi}} = 2i\sin\pr{\frac{2k+1}{2}\varphi}+\pr{2\cos\pr{\frac{\varphi}{2}}}^{2k+1},\]
    for $\varphi\in\prb{\frac{2\pi}{3},\pi}$. The right summand on the right-hand side is non-vanishing on $\left[\frac{2\pi}{3},\pi\right)$ and the left summand on the right-hand side is non-vanishing at $\varphi =\pi$. Hence, $f_{k}$ does not vanish on $\prb{\frac{2\pi}{3},\pi}$ and thus $p_{2k+1}$ does not vanish on $\cC$.
\end{proof}

Even though $p_{2k+1}$ has no zero on the arc $\cC$, we can give an explicit exponential bound for large values of $k$:

\begin{prop}\label{prop: poly odd decay}
    For all $a\in \pr{\frac{2\pi}{3},\pi}$, for all  $k\gg 1$ and any $j\in \Z $ such that $a+\frac{\pi}{8k+4}\le \frac{2\pi j}{2k+1}\le\pi$, there exists a unique $z_{j}\in\pr{\frac{2\pi j}{2k+1}-\frac{\pi}{8k+4},\frac{2\pi j}{2k+1}+\frac{\pi}{8k+4}}\times\pr{-\frac{2}{2k+1},\frac{2}{2k+1}}$ such that $p_{2k+1}\pr{z_j}=0$. Moreover, \[\abs{z_j - \frac{2\pi j}{2k+1}}\le \frac{\sqrt{2}}{2k+1}e^{-ck},\]
    where $c=2\log\pr{\frac{1+2\cos\pr{\frac{a}{2}}}{2}}$.
\end{prop}
\begin{proof}
Consider the function
\[f_{k}\pr{z} = e^{-i\pr{2k+1}\frac{z}{2}}p_{2k+1}\pr{1+e^{iz}} = 2i\sin\pr{\frac{2k+1}{2}z}+\pr{2\cos\pr{\frac{z}{2}}}^{2k+1}.\]
Our goal is to show that the zeros of $f_{k}$ are close to the zeros of $2i\sin\pr{\frac{2k+1}{2}z}$ on rectangles of the form \[\pr{\frac{2\pi j}{2k+1}-\frac{\pi}{8k+4},\frac{2\pi j}{2k+1}+\frac{\pi}{8k+4}}\times\pr{-\frac{2}{2k+1},\frac{2}{2k+1}}\]
which we will denote $R_{k,j}$ for brevity. 

For all $z,w\in \C$ we have \[\abs{e^{iz}-e^{iw}}^2 = e^{-2\Im z} + e^{-2\Im w} -2e^{-\Im z -\Im w}\cos\pr{\Re z - \Re w}.\]
Therefore, we have 
\begin{equation}\label{eq: abs(sin)}
    \abs{2i\sin\pr{z}}^2 = 2\cosh\pr{2\Im z} -2\cos\pr{2\Re z}
\end{equation} and for all $z\in \C$ such that $\abs{\Im z}\le1$ we have 
\begin{align*}
    \abs{2\cos\pr{z}-2\cos\pr{\Re z}} & = \abs{e^{iz}-e^{i\Re z} + e^{-iz}-e^{-i\Re z}}\\
     & \le \abs{e^{iz}-e^{i\Re z}} + \abs{e^{-iz}-e^{-i\Re z}} \\
     & = \abs{e^{-\Im z}-1} + \abs{e^{\Im z}-1} \\
     & = 2\sinh\abs{\Im z} \\
     & \le 2\cosh\pr{1}\abs{\Im z}.
\end{align*}
where the last inequality is true for any $z\in \prb{a,\pi}\times\prb{-1,1}$.
Hence, for all $z\in \prb{a,\pi}\times\prb{-1,1}$ we have 
\begin{equation}\label{eq: cosine bound}
    \abs{2\cos\pr{\frac{z}{2}}} \le \cosh\pr{1}\abs{\Im\pr{z}}+\abs{2\cos\pr{\frac{\Re z}{2}}} \le \cosh\pr{1}\abs{\Im z} + 2\cos\pr{\frac{a}{2}}.
\end{equation}

Let $a\in \pr{\frac{2\pi}{3},\pi}$, then $1>2\cos\pr{\frac{a}{2}}>0$. Hence, there exists $K\in\N$  such that for all $k\ge K$ we have $\frac{2}{2k+1}<\frac{1-2\cos\pr{\frac{a}{2}}}{2\cosh\pr{1}}$ and $\pr{\frac{1+2\cos\pr{\frac{a}{2}}}{2}}^{2k+1}<\sqrt{2-\sqrt{2}}$.

Let $k\ge K$, and let $j\in \Z $ be such that $a+\frac{\pi}{8k+4}\le \frac{2\pi j}{2k+1}\le\pi$. We will use Rouch{\'e}'s theorem to show there exists a unique zero of $f_{k}$ in $R_{k,j}$, i.e., we will show that 
\[\abs{f_{k}\pr{z}-2i\sin\pr{\frac{2k+1}{2}z}} = \abs{2\cos\pr{\frac{z}{2}}}^{2k+1}<\abs{2i\sin\pr{\frac{2k+1}{2}z}}\]
for all $z\in\partial R_{k,j}$. Let $z\in\partial R_{k,j}$, by \eqref{eq: cosine bound} we have 
\begin{multline*}
    \abs{2\cos\pr{\frac{z}{2}}}\le \cosh\pr{1}\abs{\Im z} + 2\cos\pr{\frac{a}{2}}\le \cosh\pr{1}\frac{2}{2k+1} +2\cos\pr{\frac{a}{2}}\\
    < \frac{1-2\cos\pr{\frac{a}{2}}}{2}+2\cos\pr{\frac{a}{2}} =\frac{1+2\cos\pr{\frac{a}{2}}}{2} 
\end{multline*}
hence,
\begin{equation}\label{eq: cos exp bound}
    \abs{2\cos\pr{\frac{z}{2}}}^{2k+1} < \pr{\frac{1+2\cos\pr{\frac{a}{2}}}{2} }^{2k+1}.
\end{equation}
If $\Im z = \pm\frac{2}{2k+1}$, then by \eqref{eq: abs(sin)} we have
\begin{multline*}
    \abs{2i\sin\pr{\frac{2k+1}{2}z}}^2 = 2\cosh\pr{\pr{2k+1}\frac{2}{2k+1}} -2\cos\pr{\pr{2k+1}\Re z} \\\ge 2\cosh\pr{2}-2>1,
\end{multline*}
and by \eqref{eq: cos exp bound} we have \[\abs{2\cos\pr{\frac{z}{2}}}^{2k+1}<1<\abs{2i\sin\pr{\frac{2k+1}{2}z}}.\]
If $\Re z = \frac{2\pi j}{2k+1}\pm\frac{\pi}{8k+4}$, then by \eqref{eq: abs(sin)} we have \[\abs{2i\sin\pr{\frac{2k+1}{2}z}}^2 = 2\cosh\pr{\pr{2k+1}z} -2\cos\pr{2\pi j \pm\frac{\pi}{4}} \ge 2-\sqrt{2}.\]
By \eqref{eq: cos exp bound} we get that \[\abs{2\cos\pr{\frac{z}{2}}}^{2k+1}<\pr{\frac{1+2\cos\pr{\frac{a}{2}}}{2}}^{2k+1}<\sqrt{2-\sqrt{2}}<\abs{2i\sin\pr{\frac{2k+1}{2}z}}.\]
The function $z\mapsto2i\sin\pr{\frac{2k+1}{2}z}$ has one simple zero in $R_{k,j}$, so by Rouch{\'e}'s theorem there exists one simple zero for $f_k$ in $R_{k,j}$. Therefore, there exists a unique $z_j\in R_{k,j}$ such that $f_{k}\pr{z_{j}} =0$. Hence,
\[2i\sin\pr{\frac{2k+1}{2}\pr{z_j-\frac{2\pi j}{2k+1}}}=2i\sin\pr{\frac{2k+1}{2}z_j} = -\pr{2\cos\pr{\frac{z}{2}}}^{2k+1}.\]
Observe that  \begin{equation}\label{eq: sin inv lip}
        \abs{\sin z - \sin w} \ge \frac{\sqrt{2}}{2}\abs{z-w}
    \end{equation}
    for all $z,w\in \prs{\Re\zeta\in\prb{-\frac{\pi}{4},\frac{\pi}{4}}}$ (see appendix for the proof of \eqref{eq: sin inv lip}), since $\frac{2k+1}{2}\pr{z_j-\frac{2\pi j}{2k+1}} \in \pr{-\frac{\pi}{4},\frac{\pi}{4}}$, we have 
\[\abs{2i\sin\pr{\frac{2k+1}{2}\pr{z_j-\frac{2\pi j}{2k+1}}}}\ge \frac{2k+1}{\sqrt{2}}\abs{z_j-\frac{2\pi j}{2k+1}},\]
and by \eqref{eq: cos exp bound} we have 
\begin{multline*}
    \abs{z_j-\frac{2\pi j}{2k+1}} \le \frac{\sqrt{2}}{2k+1}\abs{2\cos\pr{\frac{z_j}{2}}}^{2k+1}  \\<  \frac{\sqrt{2}}{2k+1}\pr{\frac{1+2\cos\pr{\frac{a}{2}}}{2} }^{2k+1} \le \frac{\sqrt{2}}{2k+1}e^{-ck}.\qedhere
\end{multline*}
\end{proof}

\subsection{Conformal properties of the modular lambda function}
Inspired by the methods of Bonk in \cite{bonk}, we utilize the fact that $\lambda$ commutes with some M{\"o}bius transformation to compute the images of circular arcs under $\lambda$. By Lemma \ref{lemma: rho to rho} 
    \[\lambda(\rho) = \rho=e^{\frac{i\pi}{3}}.\]
\begin{lemma}\label{lemma: lambda unit to line}
    For any $\tau\in \HH\cap\T$, we have $\Re\pr{\lambda\pr{\tau}}=\frac{1}{2}$. Furthermore, the function $\gamma\pr{\theta}=\lambda\pr{e^{i\theta}}$ parametrizes the line $\Re\pr{z}=\frac{1}{2}$ in a downward orientation; In particular, $\lambda\pr{\cU} = \cL_{\rho}$ and $\lambda\pr{\cU^*} = \overline{\cL_{\rho}}$ (see Figure \ref{fig: unit disk under lambda}).
\end{lemma}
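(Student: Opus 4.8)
The plan is to exploit two symmetries of $\lambda$: the given functional equation $\lambda(-1/\tau) = 1-\lambda(\tau)$, and a reflection symmetry coming from the real $q$-expansion. Since the coefficients $a(n)$ in $\lambda(\tau)=\sum_n a(n)q^{n/2}$ are real, we have $\lambda(-\bar\tau)=\overline{\lambda(\tau)}$. The crucial observation is that on the unit circle the holomorphic inversion $\tau\mapsto -1/\tau$ and the antiholomorphic reflection $\tau\mapsto -\bar\tau$ coincide: for $\tau\in\T$ we have $\bar\tau=1/\tau$, hence $-1/\tau=-\bar\tau$. First I would combine these; for $\tau\in\HH\cap\T$,
\[1-\lambda(\tau)=\lambda(-1/\tau)=\lambda(-\bar\tau)=\overline{\lambda(\tau)},\]
so $\lambda(\tau)+\overline{\lambda(\tau)}=1$, which is exactly $\Re(\lambda(\tau))=\tfrac12$. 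This settles the first assertion immediately.

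For the parametrization claim, I would record that $\gamma(\theta)=\lambda(e^{i\theta})$ is continuous and, by the above, takes values on the line $\Re z=\tfrac12$. Injectivity is the second ingredient: I would check that the open semicircle $\{e^{i\theta}:0<\theta<\pi\}$ lies in the interior of the fundamental domain $\cF_\lambda$ (a short computation gives $\abs{e^{i\theta}-\tfrac12}^2=\tfrac54-\cos\theta>\tfrac14$ for $0<\theta<\tfrac\pi2$, and $\abs{e^{i\theta}+\tfrac12}^2=\tfrac54+\cos\theta>\tfrac14$ for $\tfrac\pi2<\theta<\pi$, with $\tau=i$ on the common seam $\Re\tau=0$). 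Since $\lambda$ is a homeomorphism on $\cF_\lambda$, $\gamma$ is injective. A continuous injective curve valued in the line $\Re z=\tfrac12$, which is homeomorphic to $\R$, then forces $\theta\mapsto\Im\gamma(\theta)$ to be strictly monotone.

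To fix the direction and prove surjectivity, I would evaluate $\gamma$ at the fixed-point data: $\lambda(i)=\tfrac12$ (since $i$ is fixed by $\tau\mapsto -1/\tau$, the same computation gives $\lambda(i)=1-\lambda(i)$), together with the given value $\lambda(\rho)=\rho=\tfrac12+\tfrac{\sqrt3}{2}i$. As $\theta$ increases through $\tfrac\pi3$ and $\tfrac\pi2$ the imaginary part drops from $\tfrac{\sqrt3}{2}$ to $0$, so $\Im\gamma$ is decreasing, i.e.\ the orientation is downward. Finally, the endpoints $\tau=\pm1$ are cusps with $\lambda(\pm1)=\infty$ (one gets $\lambda(1)=\infty$ from $\lambda\pr{\tfrac{\tau-1}{\tau}}=\tfrac{\lambda(\tau)-1}{\lambda(\tau)}$ as $\tau\to i\infty$, and then uses $2$-periodicity for $-1$), so $\abs{\gamma(\theta)}\to\infty$ at both ends; combined with $\Re\gamma=\tfrac12$ and monotonicity this yields $\Im\gamma\to+\infty$ as $\theta\to0^+$ and $\Im\gamma\to-\infty$ as $\theta\to\pi^-$, so $\gamma$ sweeps out the entire line.

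The main obstacle I anticipate is the surjectivity-and-orientation bookkeeping rather than the identity $\Re\lambda=\tfrac12$, which is essentially immediate from the two symmetries. Concretely, the work lies in justifying that the whole open semicircle sits inside a single fundamental domain (so that the Hauptmodul property delivers injectivity) and in pinning down the cusp limits $\lambda(\pm1)=\infty$, which are what guarantee that the monotone curve actually exhausts the line instead of covering only a proper sub-ray.
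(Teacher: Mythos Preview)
Your argument is correct. The identity $\Re\lambda(\tau)=\tfrac12$ on $\HH\cap\T$ is obtained exactly as in the paper, by combining $\lambda(-1/\tau)=1-\lambda(\tau)$ with the reflection $\lambda(-\bar\tau)=\overline{\lambda(\tau)}$ and noting that $-1/\tau=-\bar\tau$ on the unit circle.

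For the parametrization claim your route differs slightly from the paper's. The paper pins down orientation and surjectivity by quoting the Schwarz--Christoffel type fact that $\lambda$ maps the ideal triangle with vertices $0,1,i\infty$ onto the upper half-plane, together with the values $\lambda(1)=\infty$ and $\lambda(\rho)=\rho$; it then handles the remaining arc $\theta\in(\pi/3,\pi)$ with a one-line appeal to continuity. You instead verify directly that the open semicircle lies in the interior of $\cF_\lambda$, invoke the Hauptmodul property to get injectivity and hence strict monotonicity of $\Im\gamma$, fix the sign with the two sample values $\lambda(i)=\tfrac12$ and $\lambda(\rho)=\rho$, and use the cusp limits $\lambda(\pm1)=\infty$ to force the image to be the whole line. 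Both arguments are short; yours is a bit more self-contained (it does not need the triangle-to-half-plane mapping) and makes the surjectivity over the full range $0<\theta<\pi$ explicit, whereas the paper's version leans on an external conformal-mapping fact but is more concise.
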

\begin{proof}
For any $\tau\in\HH$, we have $\overline{\lambda\pr{\tau}} = \lambda\pr{-\bar{\tau}}$ and $\lambda\pr{-\frac{1}{\tau}}=1-\lambda\pr{\tau}$. Let $\tau\in \HH$ such that $\abs{\tau}=1$. Since $\frac{-1}{\tau}=-\bar{\tau}$, we have 
\[2\Re\pr{\lambda\pr{\tau}} = \lambda\pr{\tau}+\overline{\lambda\pr{\tau}}=\lambda\pr{\tau}+\lambda\pr{-\bar{\tau}}=\lambda\pr{\tau}+\lambda\pr{\frac{-1}{\tau}} = \lambda\pr{\tau}+1-\lambda\pr{\tau}=1.\]
Following $\lambda\pr{1}=\infty$ and Lemma \ref{lemma: rho to rho} we have $\lambda\pr{{\rho}}=\rho$, as $\theta$
increases from $0$ to $\frac{\pi}{3}$ (i.e.,\ $e^{i\theta}$ travels from $1$ to $\rho$) $\lambda$ travels from $\infty$ to $\rho$ on the line $\Re\pr{z}=\frac{1}{2}$. Recall that the interior of the hyperbolic triangle with angles $0$ whose vertices are $0$, $1$, and $i\infty$ is mapped to the upper-half plane under $\lambda$. Thus, by continuity, $\gamma\pr{\left(0,\frac{\pi}{3}\right]
}=\cL_{\rho}$ and the orientation is as desired. As for the rest of the line, it follows from continuity.
\end{proof}

\begin{figure}[ht]
    \centering
    \begin{subfigure}[b]{0.4\textwidth}
    \begin{tikzpicture}[scale = 1.3] 
    \definecolor{newblue}{RGB}{26, 128, 187};
    \definecolor{newred}{RGB}{193, 39, 45};
    \draw[thick,->] (-1.5,0) -- (1.5,0) node[right] {};
    \draw[fill=gray!30,draw =white] (-1,2.6) -- (-1,0) arc (180:0:0.5) arc (180:0:0.5) -- (1,2.6);
    \draw[line width=0.2mm] (-1,0) -- (-1,2.6);
    \draw[line width=0.15mm,dashed] (1,0) -- (1,2.6);
    \draw[line width=0.2mm] (0,0) arc (0:180:0.5);
    \draw[line width=0.15mm,dashed] (1,0) arc (0:180:0.5);
    \draw[line width=0.2mm, newred] (-1,0) arc (180:0:1);
    \draw[thin, newred,->] (80:0.9) arc (80:100:0.9);
    \filldraw (0,0) circle (0.75pt) node[below] {$0$};
    \filldraw (1,0) circle (0.75pt) node[below right] {$1$};
    \filldraw (-1,0) circle (0.75pt) node[below left] {$-1$};
    \filldraw (-0.5,0) circle (0.75pt) node[below] {$-\frac{1}{2}$};
    \filldraw (0.5,0) circle (0.75pt) node[below] {$\frac{1}{2}$};
    \filldraw (60:1) circle (0.75pt) node[above right] {$\rho$};
    \filldraw (120:1) circle (0.75pt) node[above left] {$-\bar{\rho}$};
    \filldraw (0,1) circle (0.75pt) node[above] {$i$};
    \node at (-60:1)  [below]{$\phantom{-\rho}$};
    \node at (2.4,0.5)  [left]{$\xrightarrow[\phantom{--}]{\lambda}$};
    \end{tikzpicture}
    \end{subfigure}
    \begin{subfigure}[b]{0.4\textwidth}
    \begin{tikzpicture}[scale = 1.3] 
    \definecolor{newblue}{RGB}{26, 128, 187};
    \definecolor{newred}{RGB}{193, 39, 45};
    \draw[thick,->] (-1.5,0) -- (2.5,0) node[right] {};
    \draw[loosely dashed] (-1,0) arc (180:-180:1);
    \draw[loosely dashed] (0,0) arc (180:-180:1);
    \draw[line width=0.2mm, newred] (0.5,-1.2) -- (0.5,2.6);
    \draw[thin, newred, ->] (0.35,0.6) -- (0.35,0.2);
    \filldraw (2,0) circle (0.75pt) node[below right] {$2$};
    \filldraw (1,0) circle (0.75pt) node[below right] {$1$};
    \filldraw (-1,0) circle (0.75pt) node[below left] {$-1$};
    \filldraw (0,0) circle (0.75pt) node[below left] {$0$};
    \filldraw (-60:1) circle (0.75pt) node[below right] {$\overline{\rho}$};
    \filldraw (60:1) circle (0.75pt) node[above right] {${\rho}$};
    \filldraw (0.5,0) circle (0.75pt) node[below right] {$\frac{1}{2}$};
    \end{tikzpicture}
    \end{subfigure}
    \caption{The mapping of the unit circle under $\lambda$.}
    \label{fig: unit disk under lambda}
\end{figure}

\begin{lemma}\label{lemma: lambda line to arc}
     For any $\tau\in \HH\cap\prs{\Re\pr{\tau}=\frac{1}{2}}$, we have $\abs{\lambda\pr{\tau}+1}=1$ with $\lambda\pr{\tau}\in\HH$. Furthermore, the function $\gamma\pr{t}=\lambda\pr{\frac{1}{2}+it}$ parametrizes the semi-circle $\prs{\abs{z-1}=1}\cap\HH$ in a clockwise orientation; in particular, the line $\cL_{\rho}$ is mapped to the arc $\cC$ (see Figure \ref{fig: lambda conform 2}).
\end{lemma}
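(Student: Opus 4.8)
The plan is to mimic the proof of the previous lemma, replacing the reflection $\tau\mapsto -1/\tau$ (which fixes the unit circle) by the reflection fixing the vertical line $\Re\tau=\tfrac12$, namely $\tau\mapsto 1-\bar\tau$. The key observation is that on $\cL_{\rho}$ this reflection is realized arithmetically: if $\Re\tau=\tfrac12$ then $-\bar\tau=\tau-1$. I will first establish that $\lambda\pr{\tau}$ lies on the circle $\abs{z-1}=1$, i.e.\ $\abs{\lambda\pr{\tau}-1}=1$. Combining the conjugation identity $\overline{\lambda\pr{\tau}}=\lambda\pr{-\bar\tau}$ with $-\bar\tau=\tau-1$ gives $\overline{\lambda\pr{\tau}}=\lambda\pr{\tau-1}$. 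From the theta transformation formulas $\theta_{3}\pr{\tau+1}=\theta_{4}\pr{\tau}$ and $\theta_{2}\pr{\tau+1}=e^{i\pi/4}\theta_{2}\pr{\tau}$ recorded in the proof of Lemma \ref{prop: General Jacobi formula}, together with the Jacobi identity $\theta_{3}^{4}=\theta_{2}^{4}+\theta_{4}^{4}$ from \eqref{eq: Jacobi's identity}, one computes $\lambda\pr{\tau+1}=\tfrac{\lambda\pr{\tau}}{\lambda\pr{\tau}-1}$; since $w\mapsto \tfrac{w}{w-1}$ is an involution, the same identity holds for $\tau-1$, so $\overline{\lambda\pr{\tau}}=\tfrac{\lambda\pr{\tau}}{\lambda\pr{\tau}-1}$. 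Writing $w=\lambda\pr{\tau}$ this reads $\bar w\pr{w-1}=w$, hence $\abs{w}^{2}=w+\bar w=2\Re w$, and therefore $\abs{w-1}^{2}=\abs{w}^{2}-2\Re w+1=1$.

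Next I would show $\lambda\pr{\tau}\in\HH$ and prove that $\gamma$ is a simple arc. For the half-plane membership I would invoke the same fact used in the previous lemma: $\lambda$ maps the interior of the ideal triangle with vertices $0,1,i\infty$ biholomorphically onto $\HH$ (\cite[Chapter VII, p.\ 118, Theorem 4]{ChandEllipticFunc}). The open segment $\prs{\tfrac12+it:t>\tfrac{\sqrt3}{2}}$ lies in this interior, since it satisfies $0<\Re\tau<1$ and $\abs{\tau-\tfrac12}=t>\tfrac12$; consequently $\lambda\pr{\tau}\in\HH$ on all of $\cL_{\rho}$, so its image lies on the upper half of the circle $\abs{z-1}=1$. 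Biholomorphicity on the triangle also forces the restriction $\gamma\pr{t}=\lambda\pr{\tfrac12+it}$ to be injective, so $\gamma$ is a simple arc lying on this circle.

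Finally I would identify the two endpoints to conclude that the image is exactly $\cC$ with the correct orientation. At $t=\tfrac{\sqrt3}{2}$ we have $\gamma=\lambda\pr{\rho}=\rho=1+e^{2\pi i/3}$, corresponding to $\varphi=\tfrac{2\pi}{3}$; as $t\to\infty$, the $q$-expansion $\lambda\pr{\tau}=16q^{1/2}+O\pr{q}$, with $q^{1/2}=ie^{-\pi t}$, gives $\gamma\pr{t}\to 0=1+e^{i\pi}$, corresponding to $\varphi\to\pi$ (and the value $16ie^{-\pi t}$ reconfirms $\gamma\in\HH$). A continuous injective curve on the circle $\abs{z-1}=1$ that stays in $\HH$ and joins the points at angles $\tfrac{2\pi}{3}$ and $\pi$ must sweep out precisely the sub-arc $\prs{1+e^{i\varphi}:\varphi\in\left[\tfrac{2\pi}{3},\pi\right)}=\cC$, with $\varphi$ increasing, i.e.\ in the counter-clockwise orientation.

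I expect the main obstacle to be this last step: upgrading the pointwise fact ``$\lambda\pr{\tau}$ lies on $\abs{z-1}=1\cap\HH$'' to the global statement that $\gamma$ is a bijection onto $\cC$ with the stated orientation. This is where the Hauptmodul/conformal-map property of $\lambda$, rather than the elementary identities, does the essential work, since injectivity on the segment is exactly what rules out backtracking along the circle. By contrast, the modulus computation $\abs{\lambda\pr{\tau}-1}=1$ is a short algebraic consequence of the reflection identity $-\bar\tau=\tau-1$, in close parallel with the previous lemma.
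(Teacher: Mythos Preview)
Your argument is correct, but it follows a different route from the paper. You prove $\abs{\lambda(\tau)-1}=1$ directly by mimicking the previous lemma: on $\Re\tau=\tfrac12$ you have $-\bar\tau=\tau-1$, and combining $\overline{\lambda(\tau)}=\lambda(-\bar\tau)$ with the translation law $\lambda(\tau-1)=\lambda/(\lambda-1)$ (which you derive from the theta transformations and Jacobi's identity) gives the circle equation; you then invoke the ideal-triangle biholomorphism separately for $\lambda\in\HH$ and injectivity, and read off the orientation from the endpoints. The paper instead \emph{transports} the previous lemma via the M{\"o}bius map $\tau\mapsto\frac{1}{1-\tau}$: this map carries the arc $\{e^{i\varphi}:\varphi\in(0,\tfrac{\pi}{3}]\}$ onto $\cL_\rho$, and the already-listed functional equation $\lambda\bigl(\tfrac{1}{1-\tau}\bigr)=\tfrac{1}{1-\lambda(\tau)}$ then sends the line $\Re z=\tfrac12$ (the image in the previous lemma) to the circle $\abs{z-1}=1$ on the target side, with the orientation coming along for free. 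The paper's approach is shorter because the modulus, the half-plane membership, and the monotonicity are all inherited from the previous lemma through one conformal map; your approach is more self-contained and makes the reflection symmetry of $\cL_\rho$ explicit, at the cost of redoing each ingredient. Both are valid, and the step you flagged as the potential obstacle (upgrading pointwise membership to a bijection onto $\cC$) is handled cleanly by your appeal to the biholomorphism on the ideal triangle.
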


\begin{proof}
    The M{\"o}bius transformation $\tau\mapsto\frac{1}{1-\tau}$ maps the arc $\prs{e^{i\varphi}:\varphi\in\left(0,\frac{\pi}{3}\right]}$ to the line $\cL_{\rho}$ in a downward orientation, and maps the line $\cL_{\rho}$ to the arc $\cC = \prs{e^{i\varphi}:\varphi\in\left[\frac{2\pi}{3},\pi\right)}$ in a counter-clockwise orientation. Let $t\ge \frac{\sqrt{3}}{2}$, then there exists $\varphi\in\left(0,\frac{\pi}{3}\right]$ such that $\frac{1}{2}+it = \frac{1}{1-e^{i\varphi}}$. Using the transformation formula $\lambda\pr{\frac{1}{1-\tau}} = \frac{1}{1-\lambda\pr{\tau}}$,   we have
    \[\lambda\pr{\frac{1}{2}+it}=\lambda\pr{\frac{1}{1-e^{i\varphi}}} = \frac{1}{1-\lambda\pr{e^{i\varphi}}}\in \cC.\]
    As $t$ increases, $\varphi$ decreases and $\Im\pr{\lambda\pr{e^{i\varphi}}}$ increases. Hence, $\lambda\pr{\frac{1}{2}+it}$ parametrizes the arc $\cC$ in a clockwise orientation. 
\end{proof}

\begin{figure}[ht]
    \centering
    \begin{subfigure}[b]{0.4\textwidth}
    \begin{tikzpicture}[scale = 1.3] 
    \definecolor{newblue}{RGB}{26, 128, 187};
    \definecolor{newred}{RGB}{193, 39, 45};
    \draw[thick,->] (-1.5,0) -- (1.5,0) node[right] {};
    \draw[fill=gray!30,draw =white] (-1,2.6) -- (-1,0) arc (180:0:0.5) arc (180:0:0.5) -- (1,2.6);
    \draw[line width=0.2mm] (-1,0) -- (-1,2.6);
    \draw[line width=0.15mm,dashed] (1,0) -- (1,2.6);
    \draw[line width=0.2mm] (0,0) arc (0:180:0.5);
    \draw[line width=0.15mm,dashed] (1,0) arc (0:180:0.5);
    \draw[line width=0.2mm,newblue] (-0.5,0.5) -- (-0.5,2.6);
    \draw[thin, newblue, ->] (-0.35,0.9) -- (-0.35,1.3);
    \draw[thin, newred, ->] (0.35,1.3) -- (0.35,0.9);
    \draw[line width=0.2mm,newred] (0.5,0.5) -- (0.5,2.6);
    \filldraw (0,0) circle (0.75pt) node[below] {$0$};
    \filldraw (1,0) circle (0.75pt) node[below right] {$1$};
    \filldraw (-1,0) circle (0.75pt) node[below left] {$-1$};
    \filldraw (-0.5,0) circle (0.75pt) node[below] {$-\frac{1}{2}$};
    \filldraw (0.5,0) circle (0.75pt) node[below] {$\frac{1}{2}$};
    \filldraw (60:1) circle (0.75pt) node[above right] {$\rho$};
    \filldraw (120:1) circle (0.75pt) node[above left] {$-\bar{\rho}$};
    \filldraw (0.5,0.5) circle (0.75pt) node[right] {\small $\frac{1+i}{2}$};
    \filldraw (-0.5,0.5) circle (0.75pt) node[left] {\small $\frac{-1+i}{2}$};
    \node at (-60:1)  [below]{$\phantom{-\rho}$};
    \node at (2.4,0.5)  [left]{$\xrightarrow[\phantom{--}]{\lambda}$};
    \end{tikzpicture}
    \end{subfigure}
    \begin{subfigure}[b]{0.4\textwidth}
    \begin{tikzpicture}[scale = 1.3] 
    \definecolor{newblue}{RGB}{26, 128, 187};
    \definecolor{newred}{RGB}{193, 39, 45};
    \draw[thick,->] (-1.5,0) -- (2.5,0) node[right] {};
    \draw[loosely dashed] (0.5,-1.2) -- (0.5,2.6);
    \draw[loosely dashed] (-1,0) arc (180:-180:1);
    \draw[line width=0.2mm, newred] (0,0) arc (180:0:1);
    \draw[thin, newred,->] ([shift=(100:0.9)]1,0) arc (100:80:0.9);
    \draw[line width=0.2mm, newblue] (2,0) arc (0:-180:1);
    \draw[thin, newblue,->] ([shift=(-80:0.9)]1,0) arc (-80:-100:0.9);
    \filldraw (2,0) circle (0.75pt) node[below right] {$2$};
    \filldraw (1,0) circle (0.75pt) node[below right] {$1$};
    \filldraw (-1,0) circle (0.75pt) node[below left] {$-1$};
    \filldraw (0,0) circle (0.75pt) node[below left] {$0$};
    \filldraw (-60:1) circle (0.75pt) node[below right] {$\overline{\rho}$};
    \filldraw (60:1) circle (0.75pt) node[above right] {${\rho}$};
    \filldraw (0.5,0) circle (0.75pt) node[below right] {$\frac{1}{2}$};
    \end{tikzpicture}
    \end{subfigure}
    \caption{The mapping of the lines $\prs{\frac{1}{2}+it:t>\frac{1}{2}}$ and $\prs{-\frac{1}{2}+it:t\ge\frac{1}{2}}$ under $\lambda$.}
    \label{fig: lambda conform 2}
\end{figure}

\begin{lemma}\label{lemma: lambda arcs to semi}
For any $\tau\in \HH\cap\prs{\abs{\tau-1}=1}$ we have $\abs{\lambda\pr{\tau}+1}=1$. Furthermore, the function $\gamma\pr{\theta}=\lambda\pr{1+e^{i\theta}}$ parameterize the semi-circle $\prs{\abs{z-1}=1}\cap \HH$ in a clockwise orientation; in particular, the arc $\cC$ is mapped to the arc $\cU$ (see Figure \ref{fig: lambda conform 3}).
\end{lemma}

\begin{proof}
    The proof is similar to the proofs of Lemma \ref{lemma: lambda unit to line} and Lemma \ref{lemma: lambda line to arc}, and is left for the reader as an exercise.
\end{proof}

\begin{figure}[ht]
    \centering
    \begin{subfigure}[b]{0.4\textwidth}
    \begin{tikzpicture}[scale = 1.3]
    \definecolor{newblue}{RGB}{26, 128, 187};
    \definecolor{newred}{RGB}{193, 39, 45};
    \draw[thick,->] (-1.5,0) -- (1.5,0) node[right] {};
    \draw[fill=gray!30,draw =white] (-1,2.6) -- (-1,0) arc (180:0:0.5) arc (180:0:0.5) -- (1,2.6);
    \draw[line width=0.2mm] (-1,0) -- (-1,2.6);
    \draw[line width=0.15mm,dashed] (1,0) -- (1,2.6);
    \draw[line width=0.2mm] (0,0) arc (0:180:0.5);
    \draw[line width=0.15mm,dashed] (1,0) arc (0:180:0.5);
    \draw[line width=0.2mm, newred] (0,0) arc (180:90:1);
    \draw[thin, newred,->] ([shift=(127:0.9)]1,0) arc (127:145:0.9);
    \draw[line width=0.2mm, newblue] (0,0) arc (0:90:1);
    \draw[thin, newblue,->] ([shift=(37:0.9)]-1,0) arc (37:55:0.9);
    \filldraw (0,0) circle (0.75pt) node[below] {$0$};
    \filldraw (1,0) circle (0.75pt) node[below right] {$1$};
    \filldraw (-1,0) circle (0.75pt) node[below left] {$-1$};
    \filldraw (-0.5,0) circle (0.75pt) node[below] {$-\frac{1}{2}$};
    \filldraw (0.5,0) circle (0.75pt) node[below] {$\frac{1}{2}$};
    \filldraw (60:1) circle (0.75pt) node[above] {$\rho$};
    \filldraw (120:1) circle (0.75pt) node[above] {$-\bar{\rho}$};
    \filldraw (1,1) circle (0.75pt) node[above right] {$1+i$};
    \filldraw (-1,1) circle (0.75pt) node[above left] {$-1+i$};
    \node at (-60:1)  [below]{$\phantom{-\rho}$};
    \node at (2.1,0.5)  [left]{$\xrightarrow[\phantom{--}]{\lambda}$};
    \end{tikzpicture}
    \end{subfigure}
    \begin{subfigure}[b]{0.4\textwidth}
    \begin{tikzpicture}[scale = 1.3] 
    \definecolor{newblue}{RGB}{26, 128, 187};
    \definecolor{newred}{RGB}{193, 39, 45};
    \draw[thick,->] (-1.5,0) -- (2.5,0) node[right] {};
    \draw[loosely dashed] (0.5,-1.2) -- (0.5,2.6);
    \draw[line width=0.2mm, newred] (-1,0) arc (180:0:1);
    \draw[thin, newred,->] (100:0.9) arc (100:80:0.9);
    \draw[line width=0.2mm, newblue] (1,0) arc (0:-180:1);
    \draw[thin, newblue,->] (-80:0.9) arc (-80:-100:0.9);
    \draw[loosely dashed] (0,0) arc (180:-180:1);
    
    \filldraw (2,0) circle (0.75pt) node[below right] {$2$};
    \filldraw (1,0) circle (0.75pt) node[below right] {$1$};
    \filldraw (-1,0) circle (0.75pt) node[below left] {$-1$};
    \filldraw (0,0) circle (0.75pt) node[below left] {$0$};
    \filldraw (60:1) circle (0.75pt) node[above right] {$\rho$};
    \filldraw (-60:1) circle (0.75pt) node[below right] {$\bar{\rho}$};
    \filldraw (0.5,0) circle (0.75pt) node[below right] {$\frac{1}{2}$};
    \end{tikzpicture}
    \end{subfigure}
    \caption{The mapping of the arcs $\prs{1+e^{i\varphi}:\varphi\in\pr{\frac{\pi}{2},\pi}}$ and $\prs{-1+e^{i\varphi}:\varphi\in\pr{0,\frac{\pi}{2}}}$ under $\lambda$.}
    \label{fig: lambda conform 3}
\end{figure}

As we wish to understand the height and limit distributions of the zeros, we require the following corollary of Lemma \ref{lemma: lambda line to arc}:
\begin{corr}\label{corr: functions on the arc and on the line}
    Define $\varphi:\left[\frac{\sqrt{3}}{2},\infty\right)\to\left[\frac{2\pi}{3},\pi\right)$ by 
     \begin{equation}\label{eq: arg}
         \varphi\pr{y} = -i\log\pr{\lambda\pr{\frac{1}{2}+iy}-1},\quad\forall y>\frac{\sqrt{3}}{2},
     \end{equation}
     where $\log$ is the branch of the logarithm satisfying  $-\frac{\pi}{2}<\Im\log\pr{z}<\frac{3\pi}{2}$. Then $\varphi$ is real-valued, strictly-increasing, onto, and differentiable. 
\end{corr}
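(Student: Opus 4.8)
The plan is to extract all four assertions from Lemma \ref{lemma: lambda line to arc}, which already determines the image of the map $y\mapsto\lambda\pr{\frac{1}{2}+iy}$, and then to upgrade its qualitative \emph{counter-clockwise} statement into rigorous strict monotonicity by analyzing the sign of $\varphi'$. The only step demanding genuine work is this last one; the rest are essentially bookkeeping with the chosen branch.

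First I would settle that $\varphi$ is real-valued and onto. By Lemma \ref{lemma: lambda line to arc}, for every $y\ge\frac{\sqrt{3}}{2}$ the point $\lambda\pr{\frac{1}{2}+iy}$ lies on $\cC$, so $w\pr{y}:=\lambda\pr{\frac{1}{2}+iy}-1$ lies on the unit circle with argument in $\prb{\frac{2\pi}{3},\pi}$. Since the selected branch satisfies $-\frac{\pi}{2}<\Im\log\pr{z}<\frac{3\pi}{2}$, an interval containing $\prb{\frac{2\pi}{3},\pi}$, and since $\abs{w\pr{y}}=1$, we get $\log w\pr{y}=i\arg w\pr{y}$ with $\arg w\pr{y}\in\prb{\frac{2\pi}{3},\pi}$; hence $\varphi\pr{y}=-i\log w\pr{y}=\arg w\pr{y}$ is real. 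Because Lemma \ref{lemma: lambda line to arc} asserts that $\lambda$ maps $\cL_{\rho}$ \emph{onto} all of $\cC$, these arguments sweep out the whole interval $\prb{\frac{2\pi}{3},\pi}$, giving surjectivity. For differentiability I would use that $\lambda$ is holomorphic on $\HH$ and that $w\pr{y}$, having modulus $1$ and argument in $\prb{\frac{2\pi}{3},\pi}$, never meets the branch cut (the ray $\arg=-\frac{\pi}{2}$); thus $\log w$ is differentiable, and the chain rule, with the factor $i$ from $\frac{d}{dy}\lambda\pr{\frac{1}{2}+iy}$ cancelling the $-i$ in the definition, yields
\[
\varphi'\pr{y}=\frac{\lambda'\pr{\frac{1}{2}+iy}}{\lambda\pr{\frac{1}{2}+iy}-1}.
\]

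The crux is strict monotonicity. Here I would observe that $\varphi'$ is continuous, real (as $\varphi$ is real), and non-vanishing: $\lambda-1$ is nonzero on $\cL_{\rho}$ because it has modulus $1$, and $\lambda'\ne 0$ there because $\lambda$ is a Hauptmodul, hence a biholomorphism, on the interior of the fundamental domain $\cF_{\lambda}$, and $\cL_{\rho}$ lies in that interior (for $\tau=\frac{1}{2}+iy$ one has $\abs{\tau-\frac{1}{2}}=y>\frac{1}{2}$ and $0<\Re\tau<1$). A continuous, real, non-vanishing function cannot change sign, so $\varphi'$ has a constant sign. To fix that sign I would compare the two ends: $\varphi\pr{\frac{\sqrt{3}}{2}}=\frac{2\pi}{3}$ since $\lambda\pr{\rho}=\rho$ gives $w\pr{\frac{\sqrt{3}}{2}}=\rho-1=e^{2\pi i/3}$, while the $q$-expansion of $\lambda$ shows $\lambda\pr{\frac{1}{2}+iy}\to 0$ as $y\to\infty$, so $w\pr{y}\to -1=e^{i\pi}$ and $\varphi\pr{y}\to\pi$. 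Hence $\varphi$ increases from $\frac{2\pi}{3}$ toward $\pi$, forcing $\varphi'>0$ throughout; this is precisely the counter-clockwise orientation recorded in Lemma \ref{lemma: lambda line to arc}. I expect the one subtle point to be the justification that $\lambda'\ne 0$ on the line, which I would take from the Hauptmodul/biholomorphism property rather than any explicit computation; once the derivative sign is pinned down, real-valuedness, differentiability, monotonicity, and surjectivity onto $\prb{\frac{2\pi}{3},\pi}$ all follow.
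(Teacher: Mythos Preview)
Your proposal is correct. The paper treats this corollary as essentially a tautology: after noting that $\lambda\pr{\frac{1}{2}+iy}\in\cC$ (hence $\varphi$ is real), it simply writes ``By Lemma \ref{lemma: lambda line to arc}, it is strictly increasing and onto,'' and then declares differentiability ``trivial as a composition of differentiable functions.'' In other words, the paper regards the counter-clockwise orientation clause of Lemma \ref{lemma: lambda line to arc} as already encoding strict monotonicity, and does no further work.

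Your treatment of real-valuedness, surjectivity, and differentiability matches the paper's. Where you diverge is in the monotonicity step: rather than invoking the orientation statement verbatim, you compute $\varphi'$, argue it is real, continuous, and nowhere zero (via the Hauptmodul property to get $\lambda'\neq 0$ on $\cL_\rho\subset\cF_\lambda^\circ$), and then pin the sign from the endpoint values $\varphi\pr{\frac{\sqrt{3}}{2}}=\frac{2\pi}{3}$ and $\varphi\pr{y}\to\pi$. This is a sound and slightly more self-contained justification than the paper's one-line appeal; it also has the side benefit of producing the formula $\varphi'\pr{y}=\lambda'\pr{\frac{1}{2}+iy}/\pr{\lambda\pr{\frac{1}{2}+iy}-1}$, which the paper only writes down later in the proof of Theorem \ref{thm: Zeros Distrbution}.
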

\begin{remark}
    While trivial, Corollary \ref{corr: functions on the arc and on the line} above is essential to our understanding of the distribution in Theorem \ref{thm: Zeros Distrbution}.
\end{remark}§
\begin{proof}
    Let $y\ge\frac{\sqrt{3}}{2}$. Since $\lambda\pr{\rho} =\rho$ and $\lambda\pr{i\infty} =0$ and by Lemma \ref{lemma: lambda line to arc}, the line $\cL_{\rho}$ is mapped to the arc $\cC$. Thus, there exists $\theta\in\left[\frac{2\pi}{3},\pi\right)$ such that $\lambda\pr{\frac{1}{2}+iy} = 1+e^{i\theta}$, and therefore $\theta = -i\log\pr{\lambda\pr{\frac{1}{2}+iy}-1} = \varphi\pr{\theta}$. Hence, $\varphi$ is real-valued. By Lemma \ref{lemma: lambda line to arc}, it is strictly increasing and onto. Finally, it is trivially differentiable as a composition of differentiable functions. 
\end{proof}
\section{The even case}
Recall that in this case $4k=12\ell+k'$ with $k'\in\prs{0,4,8}$.
By Lemma \ref{lemma: lambda line to arc}, finding zeros of $\Theta_{\Gamma_{8k}}$ on the line $\cL_{\rho}$ is equivalent to finding zeros of $p_{2k}$ on the arc $\cC$.
\subsection{Proof of Theorem \ref{thm: Zeros}}\label{sec: zeros}
Before we can prove Theorem \ref{thm: Zeros}, we will need the following lemma:
\begin{lemma}\label{lemma: lambda approx}
    Let $\varphi$ be as defined in \eqref{eq: arg} in Corollary \ref{corr: functions on the arc and on the line}. As $y\to \infty$, we have 
    \[y=\frac{1}{\pi}\log\pr{\frac{16}{\sin\varphi\pr{y}}} + O\pr{e^{-3\pi y}}.\]
\end{lemma}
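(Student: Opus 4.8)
The plan is to reduce the statement to the $q$-expansion of $\lambda$ on the line $\Re\tau=\tfrac12$ and then invert. By the definition \eqref{eq: arg} of $\varphi$ together with Lemma \ref{lemma: lambda line to arc} and Corollary \ref{corr: functions on the arc and on the line}, we have the exact identity $\lambda\pr{\tfrac12+iy}-1=e^{i\varphi\pr{y}}$; taking imaginary parts gives the clean starting point
\[\sin\varphi\pr{y}=\Im\lambda\pr{\tfrac12+iy}.\]
Thus everything reduces to expanding $\Im\lambda$ on the vertical line and inverting the relation between $y$ and $\sin\varphi\pr{y}$.

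First I would substitute the nome. On $\tau=\tfrac12+iy$ the nome $q^{1/2}=e^{\pi i\tau}$ equals $ie^{-\pi y}$, so writing $r=e^{-\pi y}$ the expansion $\lambda=\sum_{n\ge1}a\pr{n}q^{n/2}$ becomes $\lambda=\sum_{n\ge1}a\pr{n}i^{n}r^{n}$. Since the coefficients $a\pr{n}$ are real, while $\Im\pr{i^{n}}$ vanishes for even $n$ and equals $\pr{-1}^{\pr{n-1}/2}$ for odd $n$, only the odd-indexed coefficients survive in the imaginary part:
\[\sin\varphi\pr{y}=\Im\lambda\pr{\tfrac12+iy}=a\pr{1}r-a\pr{3}r^{3}+a\pr{5}r^{5}-\cdots=16e^{-\pi y}-704e^{-3\pi y}+O\pr{e^{-5\pi y}}.\]
To justify the tail bound I would use that for $y\ge\tfrac{\sqrt3}{2}$ one has $\abs{q^{1/2}}=e^{-\pi y}<1$ well inside the disk of convergence, so the series converges geometrically and the remainder after any fixed number of terms is controlled uniformly by the corresponding power of $r$.

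The inversion is then mechanical. Factoring the leading term, $\sin\varphi\pr{y}=16e^{-\pi y}\pr{1-44e^{-2\pi y}+O\pr{e^{-4\pi y}}}$, hence
\[\frac{16}{\sin\varphi\pr{y}}=e^{\pi y}\pr{1+44e^{-2\pi y}+O\pr{e^{-4\pi y}}},\]
and applying $\tfrac1\pi\log$ with $\log\pr{1+x}=x+O\pr{x^{2}}$ yields $\tfrac1\pi\log\pr{16/\sin\varphi\pr{y}}=y+\tfrac{44}{\pi}e^{-2\pi y}+O\pr{e^{-4\pi y}}$, and rearranging gives the asymptotic with an exponentially small error.

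The only delicate point, and hence the main obstacle, is the bookkeeping of the error term: one must correctly identify which $q$-coefficients enter $\Im\lambda$ (only the odd ones, with alternating signs) and bound the tail uniformly in $y$. I should flag that this computation produces a correction of order $e^{-2\pi y}$, stemming from the coefficient $a\pr{3}=704$ of $q^{3/2}$; since $a\pr{3}\ne0$, the genuine error is $O\pr{e^{-2\pi y}}$ rather than $O\pr{e^{-3\pi y}}$ (the imaginary part has no $e^{-2\pi y}$ term, but its reciprocal does). In any case the error is exponentially small, which is all that is needed for the $o\pr{1}$ conclusions of Theorem \ref{thm: Zeros}.
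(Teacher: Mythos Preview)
Your approach is essentially identical to the paper's: substitute $\tau=\tfrac12+iy$ into the $q$-expansion of $\lambda$, take imaginary parts so that only the odd-indexed coefficients survive, and invert to solve for $y$. Your observation about the error term is also correct and matches the paper's own proof, which concludes with $O\pr{e^{-2\pi y}}$ rather than the $O\pr{e^{-3\pi y}}$ appearing in the lemma statement; since $a\pr{3}=704\neq0$, the $e^{-2\pi y}$ correction is genuinely present, so the discrepancy is a typo in the statement rather than a gap in your argument, and as you note the weaker bound suffices for Theorem~\ref{thm: Zeros}.
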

This lemma is the missing ingredient for the asymptotic formula for the heights of the zeros.

\begin{proof}
Let $y,\varphi$ such that $\lambda\pr{\frac{1}{2}+iy} =1+e^{i\varphi}$, i.e., $\varphi=\varphi\pr{y}$. Substituting $\tau=\frac{1}{2}+iy$ in the $q$-expansion of $\lambda$, we get:
\begin{multline*}
    1+e^{i\varphi} = \lambda\pr{\frac{1}{2}+iy} = \sum_{n=1}^{\infty} a\pr{n}e^{\frac{i\pi}{2}}e^{-\pi n y} \\
    = \sum_{n=1}^{\infty}\pr{-1}^n a\pr{2n}e^{-2\pi n y} +ie^{-\pi y}\sum_{n=0}^{\infty}\pr{-1}^n a\pr{2n+1}e^{-2\pi n y}.
\end{multline*}
Taking the imaginary part of both sides of the equation, we obtain 
\[\sin\varphi=e^{-\pi y}\sum_{n=0}^{\infty}\pr{-1}^n a\pr{2n+1}e^{-2\pi n y} = 16e^{-\pi y}+ O\pr{e^{-3\pi y}}\]
as $y\to \infty$. Therefore, 
\[y = \frac{1}{\pi}\log\pr{\frac{16}{\sin\varphi}\pr{1+O\pr{e^{-2\pi y}}}} = \frac{1}{\pi}\log\pr{\frac{16}{\sin\varphi}} + O\pr{e^{-2\pi y}}.\qedhere\]
\end{proof}

We are now ready to prove Theorem \ref{thm: Zeros}:
\begin{proof}[Proof of Theorem \ref{thm: Zeros}]
We have \[\Theta_{\Gamma_{8k}}\pr{\frac{1}{2}+iy} = \frac{\theta_{3}^{8k}\pr{\frac{1}{2}+iy}}{2}p_{2k}\pr{\lambda\pr{\frac{1}{2}+iy}} = \frac{\theta_{3}^{8k}\pr{\frac{1}{2}+iy}}{2}p_{2k}\pr{1+e^{i\varphi\pr{y}}}.\]
By Corollary \ref{corr: functions on the arc and on the line}, $\varphi$ is a bijection from $\left[\frac{\sqrt{3}}{2},\infty\right)$ to $\left[\frac{2\pi}{3},\pi\right)$. By Proposition \ref{prop: polyzeros}, there exist $\varphi_{k,1},\ldots,\varphi_{k,\ell}\in \prb{\frac{2\pi}{3},\pi}$ such that for any $1\le j\le \ell$, we have $p_{2k}\pr{1+e^{i\varphi_{k,j}}}$ and $\varphi_{k,j}\in\pr{\frac{\pi}{k}\pr{k-\ell +j-1},\frac{\pi}{k}\pr{k-\ell +j}}$. Denote $\tau_{j} = \lambda^{-1}\pr{1+e^{i\varphi_{k,\ell - j+1}}}$, then \[\Theta_{\Gamma_{8k}}\pr{\tau_{j}} = \frac{\theta_{3}^{8k}\pr{\tau_{j}}}{2}p_{2k}\pr{\lambda\pr{\tau_{j}}} = \frac{\theta_{3}^{8k}\pr{\tau_{j}}}{2}p_{2k}\pr{1+e^{i\varphi_{k,j}}}=0.\]
Therefore, we found $\ell$ inequivalent zeros in $\cL_{\rho}$, and thus $\Theta_{\Gamma_{8k}}$ has all of its zeros on $\cL_{\rho}$. Now, we have $\Im\tau_{1}=\varphi^{-1}\pr{\varphi_{k,\ell}}>\ldots > \varphi^{-1}\pr{\varphi_{k,1}} = \Im\tau_{\ell}$.

As $\varphi \to \pi^{-}$, we have $y=y\pr{\varphi}\to \infty$.
Hence, by Lemma \ref{lemma: lambda approx}, we have 
\[y=\frac{1}{\pi}\log\pr{\frac{16}{\sin\varphi}\pr{1+o\pr{1}}} = \frac{1}{\pi}\log\pr{\frac{16}{\sin\varphi}}+o\pr{1}\]
as $\varphi\to \pi^{-}$.
Using the fact that $\lim_{\varphi\to\pi}\frac{\sin\varphi}{\pi-\varphi}=1$, we have 
\[y= \frac{1}{\pi}\log\pr{\frac{16}{\sin\varphi}}+o\pr{1} = \frac{1}{\pi}\log(\frac{16}{\pi-\varphi})+o\pr{1}.\]
Let $m=o\pr{k}$ as $k\to\infty$. Since \[\varphi_{k,\ell-m+1} \in \pr{\pi-\frac{\pi m}{k},\pi-\frac{\pi\pr{m-1}}{k}},\]
we have that $\varphi_{k,\ell-m+1}\to\pi$ as $k\to\infty$.
Finally, since $\lim_{k\to\infty}\frac{\pi - \frac{\pi m}{k}}{\varphi_{\ell-m+1}} = 1$, we get \[\Im\tau_{m} = \frac{1}{\pi}\log\pr{\frac{16}{\pi-\pr{\pi-\frac{\pi m}{k}}}} + o\pr{1}= \frac{1}{\pi}\log\pr{\frac{16 k}{\pi m}} +o\pr{1}.\qedhere\]
\end{proof}.

\subsection{Proof of Theorem \ref{thm: Zeros Distrbution}}\label{sec: distribution} 
Our goal is to prove that the zeros are equidistributed on the line with respect to the density \[\varrho\pr{y} = \frac{3}{\pi}\varphi'\pr{y} = \frac{3}{\pi}\frac{\lambda'\pr{\frac{1}{2}+iy}}{\lambda\pr{\frac{1}{2}+iy}-1}.\] 
That is, to prove that for any $f\in C_{c}\pr{\cL_{\rho}}$, i.e., continuous with compact support, we have 
\[\frac{1}{\ell}\msum{\tau\in\cL_{\rho}}{\Theta_{\Gamma_{8k}}\pr{\tau}=0}{}f\pr{\tau}\xrightarrow{N\to\infty}\int_{\cL_{\rho}}f\pr{y}\varrho\pr{y}\dd{y}.\]
Recall the function $\varphi\pr{y} = -i\log(\lambda\pr{\frac{1}{2}+iy}-1)$ is a differential function on $\left[\frac{\sqrt{3}}{2},\infty\right)$.
Let $\tau_{1},\ldots,\tau_{\ell}\in\cL_{\rho}$ be the zeros of $\Theta_{\Gamma_{8k}}$. By Lemma \ref{corr: functions on the arc and on the line}, we have that $\varphi$ is real-valued and strictly increasing. Hence, by Proposition \ref{prop: q polyzeros}(ii), for all $\prb{a,b} \sub \left[\frac{\sqrt{3}}{2},\infty\right)$ we have: 
\begin{multline}
    \frac{\#\prs{1\le j\le \ell :\Theta_{\Gamma_{8k}}\pr{\tau_{j}}=0, \Im\pr{\tau_{j}}\in\prb{a,b}}}{\ell}
    \\ =  \frac{\#\prs{1\le j\le \ell:\varphi_{k,j}\in\prb{\varphi\pr{a},\varphi\pr{b}}}}{\ell} \xrightarrow[]{\ell\to\infty}\frac{3\pr{\varphi\pr{b}-\varphi\pr{a}}}{\pi}\\=\frac{3}{\pi}\int_{a}^{b}\varphi'\pr{y}dy = \int_{a}^{b}\varrho\pr{y}dy.
\end{multline}
For any $f\in C_{c}\pr{\cL_{\rho}}$, by approximating $f$ with linear combinations of indicators of intervals, we have 
\[\frac{1}{\ell}\sum_{j=1}^{\ell}f\pr{\tau_{j}}\xrightarrow[]{k\to\infty}\int_{\cL_{\rho}}f\pr{\tau}\varrho\pr{y}dy.\]
As $y\to\infty$, we have 
\[\frac{1}{\lambda\pr{\frac{1}{2}+iy}-1} = -1+O\pr{\lambda\pr{\frac{1}{2}+iy}} = -1+O\pr{e^{-\pi y}},\]
and $\lambda'\pr{\frac{1}{2}+iy}  = -16\pi e^{-\pi y} + O\pr{e^{-2\pi y}}$. Hence,
\[\varrho\pr{y} = \frac{3}{\pi}\frac{\lambda'\pr{\frac{1}{2}+iy}}{\lambda\pr{\frac{1}{2}+iy}-1} = 48e^{-\pi y}+ O\pr{e^{-2\pi y}},\]
which concludes the proof of Theorem \ref{thm: Zeros Distrbution}.\qed

\section{The odd case}\label{sec: odd case}
In this case, the forms $\Theta_{\Gamma_{8k+4}}$ are modular form of weight $4k+2$ for $\Gamma\pr{2}$. We write $4k+2 = 12\ell+k'$ with $k'\in\prs{6,10,14}$. 

\subsection{Proof of Theorem \ref{thm: Zeros odd case}} 
First, notice 
\begin{equation}\label{eq: coef odd}
    p_{2k+1}\pr{z} = 1+z^{2k+1}+\pr{z-1}^{2k+1} = 2 + \sum_{j=1}^{2k}\pr{-1}^{j}{\binom{2k+1}{j}z^{j}}.
\end{equation} Hence, the degree of $ p_{2k+1}$ is $2k$;  by Lemma \ref{lemma: simplicity}, for any $k\ge 1$, the zeros of $p_{2k+1}$ are simple and therefore there are $2k$ distinct zeros for $p_{2k+1}$. Since $\lambda$ is injective, there are $2k$ distinct zeros for $\Theta_{\Gamma_{8k+4}}$ in the fundamental domain. Use \eqref{eq: lambda rep} and \eqref{eq: coef odd}, and get
\[\Theta_{\Gamma_{8k+4}} = \frac{\theta_{3}^{4}}{2}\pr{2\theta_{3}^{8k}+ \sum_{j=1}^{2k}\pr{-1}^{j}{\binom{2k+1}{j}\theta_{3}^{8k-4j}\theta_{2}^{4j}}},\] since $\theta_{3}^{4}$ vanish at $i\infty$, so does $\Theta_{\Gamma_{8k+4}}$. 
By the valence formula \eqref{eq: lvl2 valence}:
\begin{equation*}
    \ord_{\infty}\pr{\Theta_{\Gamma_{8k+4}}}+\ord_\mathfrak{1}\pr{\Theta_{\Gamma_{8k+4}}}+\ord_\mathfrak{0}\pr{\Theta_{\Gamma_{8k+4}}}+\sum_{z\in\cF_{\lambda}}\ord_{z}\pr{\Theta_{\Gamma_{8k+4}}} = \frac{4k+2}{2} = 2k+1.
\end{equation*}
hence, the zero at $i\infty$ and the $2k$ distinct zeros in the fundamental domain are all simple and account for all of the zeros of $\Theta_{\Gamma_{8k+4}}$. Additionally, by Lemma \ref{lemma: lambda unit to line}, Lemma \ref{lemma: lambda line to arc}, and Lemma \ref{lemma: lambda arcs to semi} we have that $\lambda\pr{\cU} = \cL_{\rho}$, $\lambda\pr{\cU^*} = \overline{\cL_{\rho}}$, $\lambda\pr{\cL_{\rho}} = \cC$, $\lambda\pr{\cL_{\rho}^*} = \overline{\cC}$, $\lambda\pr{\cC} = \cU$ and $\lambda\pr{\cC^{*}}=\overline{\cU}$. Hence, any $\tau\in\cF_{\lambda}$ on one of the geodesics above is a zero of $\Theta_{\Gamma_{8k+4}}$ if and only if $\lambda\pr{\tau}$ is a zero of $p_{2k+1}$ on the map of that geodesic. Together with Lemma \ref{lemma: thm 1.3 lemma} we get that there are $\ell$ zeros of $\Theta_{\Gamma_{8k+4}}$ on each of the geodesics $\cU$ and $\cU^*$, and no zeros on the geodesics  $\cL_{\rho}$, $\cL_{\rho}^*$, $\cC$, and $\cC^{*}$.\qed

\subsection{Proof of Theorem \ref{thm: zeros odd case decay}} Let $\alpha\in \pr{0,\frac{1}{3}}$, denote $a = \frac{2\pi}{3} + \pi\frac{1-3\alpha}{6}=\frac{5\pi-3\alpha\pi}{6}$ and $b=\pi -\pi\frac{1-3\alpha}{6}=\frac{5\pi+3\alpha\pi}{6}$. Denote $\cR = \prb{a,b}\times\prb{-1/3,1/3}$, then $\cR$ is compact and $1+e^{iz}\neq0,1$ for all $z\in \cR$. Therefore, the derivative of $g\pr{z} = \lambda^{-1}\pr{1+e^{iz}}$ is bounded as it is continuous on $\cR$ and $\cR$ is compact. Furthermore, $\cR$ is convex and thus $g$ is Lipschitz on $\cR$, with the implied constant depending only on $\alpha$.

Using Proposition \ref{prop: poly odd decay}, for $c_{\alpha} = -2\log\pr{\frac{1+2\cos\pr{\frac{5\pi-3\alpha\pi}{12}}}{2}}$, $k\gg1$, and any $j\in \Z$ such that $a+\frac{\pi}{8k+4}\le \frac{2\pi j}{2k+1}\le \pi$, there exists $z_j \in \prb{a,\pi}\times\prb{-1/3,1/3}$ such that \[\abs{z_j - \frac{2\pi j}{2k+1}}\le \frac{\sqrt{2}}{2k+1}e^{-c_{\alpha}k}.\]
In particular, for any $j\in \Z$ such that $a+\frac{\pi}{8k+4}\le \frac{2\pi j}{2k+1}\le b-\frac{\pi}{8k+4}$, there exists $z_j \in \prb{a,b}\times\prb{-1/3,1/3}$ such that \[\abs{z_j - \frac{2\pi j}{2k+1}}\le \frac{\sqrt{2}}{2k+1}e^{-c_{\alpha}k}.\]
Let $m$ be the number of such $z_j$. The number $m$ is the same as the number of $j$-s satisfying
\begin{multline*}
    \frac{2k+1}{2}\cdot\frac{5-3\alpha}{6}+\frac{1}{8}\\
    =\frac{2k+1}{2\pi}a+\frac{1}{8}\le j\le \frac{2k+1}{2\pi}b-\frac{1}{8} = \frac{2k+1}{2}\cdot\frac{5+3\alpha}{6}-\frac{1}{8},
\end{multline*}
The number of integers in a closed interval is greater than one less than the length of the interval, i.e. 
\begin{multline*}
    m\ge \pr{\frac{2k+1}{2}\cdot\frac{5+3\alpha}{6}-\frac{1}{8}}-\pr{\frac{2k+1}{2}\cdot\frac{5-3\alpha}{6}+\frac{1}{8}} -1 \\= \frac{2k+1}{2}\alpha -\frac{5}{4}\ge \alpha k-2.
\end{multline*}
We also have that $\tau = \lambda^{-1}\pr{1+e^{iz_j}} = g\pr{z_j}$ is a zero of $\Theta_{\Gamma_{8k+4}}$ for any $z_j$ as above. Let $j_k$ be the least integer so that $a+\frac{\pi}{8k+4}\le \frac{2\pi j_k}{2k+1}$.

For any $1\le j\le m$ denote $\tau_{j} = \lambda^{-1}\pr{1+e^{iz_{j_k +j-1}}} = g\pr{z_{j_k +j-1}}$. By Lemma \ref{lemma: lambda line to arc} we have that $\Re\lambda^{-1}\pr{1+e^{\frac{i2\pi \pr{j_k +j-1}}{2k+1}}}=\frac{1}{2}$. Recall that $g$ is Lipschitz, with the implied constant depending only on $\alpha$. Therefore,
\begin{align*}
    \abs{\Re\tau_{j} - \frac{1}{2}} & =\abs{\Re\lambda^{-1}\pr{1+e^{iz_{j_k +j-1}}} - \Re\lambda^{-1}\pr{1+e^{\frac{2\pi\pr{j_k +j-1}}{2k+1}}}} \\
    & \le \abs{\lambda^{-1}\pr{1+e^{iz_{j_k +j-1}}} - \lambda^{-1}\pr{1+e^{\frac{i2\pi \pr{j_k +j-1}}{2k+1}}}} \\ 
    & \lesssim_{\alpha}\abs{z_{j_{k}+j-1} - \frac{2\pi\pr{j_{k}+j-1}}{2k+1}}\\
    & \ll_{\alpha} k^{-1}e^{-c_{\alpha k}},
\end{align*}
which concludes the proof of Theorem \ref{thm: zeros odd case decay}.\qed
\appendix
\section{}

Here, we prove the inequality we introduced in \eqref{eq: sin inv lip}:
\begin{lemma}\label{lemma: sin inv lip}
    For any $z,w\in\prs{\Re\zeta \in\prb{-\frac{\pi}{4},\frac{\pi}{4}}}$, we have
    \[\abs{\sin z - \sin w} \ge \frac{\sqrt{2}}{2}\abs{z-w}\]
\end{lemma}
\begin{proof}
Indeed, we have 
\begin{multline}\label{eq: sin ineq}
    \abs{\sin z - \sin w} =\abs{\pr{w-z}\int_{0}^{1}\cos\pr{z+t\pr{w-z}}\dd{t}} \\\ge \abs{z-w}\abs{\int_{0}^{1}\Re\pr{\cos\pr{z+t\pr{w-z}}}\dd{t}}.
\end{multline}
Is is known that $\cos\pr{x+iy}=\cos x\cosh y-i\sin x\sinh y$ for any $x,y\in\R$. Hence,
\begin{align*}
\abs{\int_{0}^{1}\Re\pr{\cos\pr{z+t\pr{w-z}}}\dd{t}} & =\abs{\int_{0}^{1}\cos\pr{\Re z+t\Re\pr{w-z}}\cosh\pr{\Im z +t\Im\pr{w-z}}\dd{z}\dd{t}}\\
& = \int_{0}^{1}\cos\pr{\Re z+t\Re\pr{w-z}}\cosh\pr{\Im z +t\Im\pr{w-z}}\dd{z}\dd{t}\\
& \ge \int_{0}^{1}\frac{\sqrt{2}}{2}\cdot 1\dd{t}=\frac{\sqrt{2}}{2},
\end{align*}
where we used the fact that $\cosh\pr{p}\ge 1$ for all $p\in \R$ and $\cos\pr{\Re z+t\Re\pr{w-z}}\ge \frac{\sqrt{2}}{2}$ for all $t\in \pr{0,1}$.
Together with \eqref{eq: sin ineq}, we get \eqref{eq: sin inv lip}
\end{proof}

Here we provide a detailed proof of Lemma \ref{prop: General Jacobi formula}.
\begin{proof}
    Recall that $\Gamma_{n} = D_{n}\cup \pr{D_{n}+\delta_{n}}$, where $D_{n}$ and $\delta_{n}$ are defined as in \eqref{eq: Dn lattice def}.
    Consider its theta function: 
    \begin{equation}\label{eq: splitting gamma}
        \Theta_{\Gamma_{n}}\pr{\tau} = \sum_{x\in D_n\cup\pr{D_n +\delta_n}}q_{2}^{\norm{x}^2} = \sum_{x\in D_n}q_{2}^{\norm{x}^2}+\sum_{x\in D_n+\delta_n}q_{2}^{\norm{x}^2}
    \end{equation}
    We begin by evaluating the sum over $D_n$ on the right-hand side of \eqref{eq: splitting gamma}. By definition $x_{1}+\ldots+x_n$ is even for all $x=\pr{x_1,\ldots,x_n}\in D_n$. Therefore,
    \[\norm{x}^2 = x_{1}^2+\ldots+x_n^{2} =x_{1}+\ldots+x_{n}\equiv0\bmod{2},\]
    as $x^2 \equiv x\bmod{2}$ for every integer $x$ and consequently,
    \[\ind_{D_n}\pr{x} = \frac{1}{2}\pr{1+\pr{-1}^{\norm{x}^2}},\quad\forall x\in\Z^n,\]
    where $\ind_{D_n}$ denotes the indicator function of $D_n$. Hence,
    \[\sum_{x\in D_n}q_{2}^{\norm{x}^2} = \sum_{x\in\Z^n}\ind_{D_n}\pr{x}q_{2}^{\norm{x}^2}=\frac{1}{2}\sum_{x\in\Z^n}q_{2}^{\norm{x}^2}+\frac{1}{2}\sum_{x\in\Z^n}\pr{-1}^{\norm{x}^2}q_{2}^{\norm{x}^2}.\]
    By definition, we have 
    \[\sum_{x\in\Z^n}q_{2}^{\norm{x}^2} = \theta_{3}^n\pr{\tau},\] and similarly, 
    \[\sum_{x\in\Z^n}\pr{-1}^{\norm{x}^2}q_{2}^{\norm{x}^2}=\theta_{4}^{n}\pr{\tau}.\]
    Therefore, 
    \begin{equation}\label{eq: sum on lattice Dn}
        \sum_{x\in D_n}q_{2}^{\norm{x}^2} = \frac{1}{2}\pr{\theta_{3}^n\pr{\tau} + \theta_{4}^n\pr{\tau}}
    \end{equation}
    To evaluate the sum over the shift of $D_n$, we denote ${x}^{*} = \pr{-x_1,x_2,\ldots,x_n}$ for all $x=\pr{x_1,\ldots,x_n}\in\R^n$. We have $\norm{{x}^{*}}=\norm{x}$ for all $x\in\R^n$ and $D_n^* = D_n$, i.e., $x^* \in D_n$ if and only if $x\in D_n$, as changing the sign of an integer does not change its parity. Hence,
    \begin{multline*}
    \sum_{x\in D_{n}+\delta_n}q_{2}^{\norm{x}^2} = \sum_{{x\in D_{n}}}q_{2}^{\norm{x+\delta_n}^2}
    =\sum_{{x\in D_{n}}}q_{2}^{\norm{x+\delta_n}^2}\\=\sum_{{x\in D_{n}}}q_{2}^{\norm{x^* +\delta_n^*}^2} = \sum_{{x\in D_{n}}}q_{2}^{\norm{x +\delta_n^*}^2}=\sum_{{x\in D_{n}}}q_{2}^{\norm{x+e_{1}^* +\delta_n}^2},
    \end{multline*}
    where $e_{1}=\pr{1,0,\ldots,0}\in\R^n$. $D_n$ is of index $2$ in $\Z^n$ and since $e_{1}^*\notin D_n$, we have $\Z^n = D_{n}\cup \pr{e_{1}^* +D_n}$. Thus,
    \[ \sum_{x\in \Z^{n}}q_{2}^{\norm{x+\delta_n}^2} =\sum_{{x\in D_{n}}}q_{2}^{\norm{x+\delta_n}^2}+\sum_{{x\in D_{n}}}q_{2}^{\norm{x+e_{1}^* +\delta_n}^2}=2\sum_{x\in D_{n}+\delta_n}q_{2}^{\norm{x}^2}.\]
    By definition 
    \[\sum_{x\in D_{n}+\delta_n}q_{2}^{\norm{x}^2}=\frac{1}{2}\sum_{x\in \Z^{n}}q_{2}^{\norm{x+\delta_n}^2} = \frac{1}{2}\theta_{2}^{n}\pr{\tau}.\]
    Connecting the equation above with \eqref{eq: splitting gamma} and \eqref{eq: sum on lattice Dn} we get \eqref{eq: jacobi rep} as required. 
\end{proof}

\bibliography{refs}
\bibliographystyle{plain}
\end{document}